\theoremstyle{plain}
\theoremstyle{definition}\newtheorem{theorem}{Theorem}[section]
\theoremstyle{plain}\newtheorem{lemma}[theorem]{Lemma}
\theoremstyle{plain}\newtheorem{coro}[theorem]{Corollary}
\theoremstyle{plain}\newtheorem{prop}[theorem]{Proposition}
\theoremstyle{remark}\newtheorem{remark}{Remark}[section]
\theoremstyle{definition}
\theoremstyle{plain}
\newcommand{\wred}[1]{\textcolor{black}{#1}}
\newcommand{\hs}[1]{\textcolor{black}{#1}}
\newcommand{\wg}[1]{\textcolor{black}{#1}}
\newcommand{\Div}{\mathrm{div}\,}
\newcommand{\B}{\Big}
\newcommand{\be}{\begin{equation}}
\newcommand{\ee}{\end{equation}}
 \newcommand{\ba}{\begin{aligned}}
 \newcommand{\ea}{\end{aligned}}
  \newcommand{\f}{\frac}
  \newcommand{\ben}{\begin{enumerate}}
   \newcommand{\een}{\end{enumerate}}
\newcommand{\ti}{\nabla}
\newcommand{\Rmnum}[1]{\expandafter\@slowromancap\romannumeral #1@}
\numberwithin{equation}{section}
\begin{document}
\title{On continuation criteria for the full compressible Navier-Stokes equations in Lorentz spaces
  }
	\author{Yanqing Wang\footnote{ Department of Mathematics and Information Science, Zhengzhou University of Light Industry, Zhengzhou, Henan  450002,  P. R. China Email: wangyanqing20056@gmail.com}, ~\, Wei Wei\footnote{Center for Nonlinear Studies, School of Mathematics, Northwest University, Xi'an, Shaanxi 710127, P. R. China Email: ww5998198@126.com }~~,\;~Gang Wu\footnote{School of Mathematical Sciences,  University of Chinese Academy of Sciences, Beijing 100049, P. R. China Email: wugang2011@ucas.ac.cn}~~~~and ~\,
Yulin Ye\footnote{School of Mathematics and Statistics,
Henan University,
Kaifeng, 475004,
P. R. China Email: ylye@vip.henu.edu.cn   }}
\date{}
\maketitle
\begin{abstract}
 In this paper, we derive several new sufficient conditions of non-breakdown of strong solutions for both the 3D heat-conducting compressible Navier-Stokes system and nonhomogeneous
incompressible Navier-Stokes equations. First, it is shown that  there exists a positive constant $\varepsilon$ such that the solution $(\rho,u,\theta)$ to full compressible
Navier-Stokes equations can be extended   beyond $t=T$ provided that   one of the following two conditions holds
  \begin{enumerate}[(1)]
 \item  $\rho \in L^{\infty}(0,T;L^{\infty}(\mathbb{R}^{3}))$,   $u\in L^{p,\infty}(0,T;L^{q,\infty}(\mathbb{R}^{3}))$ and
     \begin{equation}\label{L1}\| u\|_{L^{p,\infty}(0,T;L^{q,\infty}(\mathbb{R}^{3}))}\leq \varepsilon, ~~\text{with}~~ {2/p}+ {3/q}=1,\ \  q>3;\end{equation}
      \item $\lambda<3\mu,$ $\rho \in L^{\infty}(0,T;L^{\infty}(\mathbb{R}^{3}))$, $\theta\in L^{p,\infty}(0,T;L^{q,\infty}(\mathbb{R}^{3}))$ and
  \begin{equation}\label{L12}\|\theta\|_{L^{p,\infty}(0,T;
    L^{q,\infty}(\mathbb{R}^{3}))}\leq \varepsilon, ~~\text{with}~~ {2/p}+ {3/q}=2,\ \  q>3/2.\end{equation}
  \end{enumerate}
  To the best of our knowledge, this is the first continuation theorem allowing the time
direction to  be in Lorentz spaces for the   compressible
fluid. Second,   we establish some blow-up criteria in anisotropic Lebesgue spaces, for the finite blow-up time $T^{\ast}$,
 \begin{enumerate}[(1)]
 \item   Assume that the pair $(p,\overrightarrow{q})$ satisfies $ {2}/{p }+{1}/{q_{1} }+{1}/{q_{2} }+{1}/{q_{3} }=1$ $(1<q_{i}<\infty)$ and  \eqref{xianzhifanwei}, then
 \begin{equation}\label{AL1}\limsup_{t\rightarrow T^*}\B(
  \|\rho \|_{L^{\infty}(0,t;L^{\infty}(\mathbb{R}^{3}))}+ \|  u \|_{L^{p }(0,t; L_{1}^{ q_{1} }L_{2}^{ q_{2} }
L_{3}^{q_{3} }(\mathbb{R}^{3}) )} \B)= \infty,
  \end{equation}
   \item Let the pair $(p,\overrightarrow{q})$ satisfy ${2}/{p }+{1}/{q_{1} }+{1}/{q_{2} }+{1}/{q_{3} }=2$ $(1<q_{i}<\infty)$ and  \eqref{xianzhifanwei}, then
    \begin{equation}\label{AL2}\limsup_{t\rightarrow T^*}\B(
  \|\rho \|_{L^{\infty}(0,t;L^{\infty}(\mathbb{R}^{3}))}+ \| \theta \|_{L^{p }(0,t; L_{1}^{ q_{1} }L_{2}^{ q_{2} }
L_{3}^{q_{3} }(\mathbb{R}^{3}) )} \B)= \infty, (\lambda<3\mu).
  \end{equation}
   \end{enumerate}
   Third,
 without the condition on $\rho$ in \eqref{L1} and \eqref{AL1},  the results also hold  for the 3D nonhomogeneous incompressible Navier-Stokes equations. The appearance of vacuum in these systems could be allowed.
 \end{abstract}
\noindent {\bf MSC(2000):}\quad 76D03, 76D05, 35B33, 35Q35 \\\noindent
{\bf Keywords:} Navier-Stokes equations;   weak solutions;   regularity \\
\section{Introduction}
\label{intro}
\setcounter{section}{1}\setcounter{equation}{0}
In this paper, we are concerned with the
 following 3D full compressible Navier-Stokes  equations
\be\left\{\ba\label{FNS}
	&\rho_t+\nabla \cdot (\rho u)=0, \\
&\rho  u_t+\rho u\cdot\nabla u+\nabla
P(\rho,\theta)-\mu\Delta u-(\mu+\lambda)\nabla\text{div\,}u=0,\\
& c_{v}[\rho \theta_t+\rho u\cdot\nabla\theta] +P\text{div\,}u-\kappa\Delta\theta=\frac{\mu}{2}\left|\nabla u+(\nabla u)^{\text{tr}}\right|^2+\lambda(\text{div\,}u)^2,\\&(\rho, u, \theta)|_{t=0}=(\rho_0, u_0, \theta_0),
	\ea\right.\ee
 where $\rho,\, u,\, \theta$ stand for the flow density, velocity and the absolute temperature, respectively. The scalar function $P$ represents the   pressure, the state equation of which is  determined by
 \be
 P=R\rho\theta,  R>0,
 \ee
 and $\kappa$ is a positive constant.   $\mu$ and $\lambda$ are the coefficients of viscosity, which are assumed to be constants, satisfying the following physical restrictions:
\be\label{nares}
\mu>0,\ 2\mu+3\lambda\ge0.\ee  The
initial
conditions satisfy
\be\label{non-boundary}
\rho(x,t)\rightarrow0,\ u(x,t)\rightarrow0,\ \theta(x,t)\rightarrow0,\ \mathrm{as}\ |x|\rightarrow\infty,\ \mathrm{for}\ t\ge0.
\ee
It is clear that if  the triplet $(\rho(x,t),u(x, t),\theta(x, t) )  $ \hs{solve} system \eqref{FNS}, then the triplet $(\rho_{\lambda},u_{\lambda},\theta_{\lambda})  $ is also a solution of \eqref{FNS} for any $\lambda\in \hs{\mathbb{R}^{+}},$ where
\be\label{eqscaling}
\rho_{\lambda}=  \rho\hs{(\lambda x,\lambda^{2}t)},~~~~~u_{\lambda}=\lambda u\hs{(\lambda x,\lambda^{2}t)},~~~~~\theta_{\lambda}=\lambda^{2} \theta\hs{(\lambda x,\lambda^{2}t)}.
\ee
The scaling of velocity $u$ is the same as the incompressible Navier-Stokes system
\be\left\{\ba\label{NS}
&u_{t} -\Delta  u+ u\cdot\ti
u  +\nabla \Pi=0, \\
&\Div u=0,\\
&u|_{t=0}=u_0,
\ea\right.\ee
 In contrast with the pressure $P=R\rho\theta$ in \eqref{FNS}, the \hs{pressure} $\Pi$ in \eqref{NS} is determined by $\Delta\Pi=-$divdiv$(u\otimes u)$.
 The global \hs{well-posedness} of the 3D Navier-Stokes equations \eqref{NS} is an outstanding problem.
 It is well known that the \hs{solution $u$ to}
 \eqref{NS} is regular on $(0,T]$ if $u$ \hs{satisfy}
  \be \label{serrin1}
  u\in  L^{p} (0,T;L^{q}( \mathbb{R}^{3})) ~~~ \text{with}~~~~2/p+3/q=1, ~~q>3.
  \ee
   This is so-called Serrin  type regularity criteria \hs{for} the incompressible Navier-Stokes system obtained in \cite{[Serrin],[Struwe]}. Since
 Lorentz spaces $L^{r,s}(\mathbb{R}^{3})$ $(s\geq r)$ are larger than the \hs{Lebesgue}
spaces $L^{r }(\mathbb{R}^{3})$ and enjoy the same scaling as
  $L^{r}(\mathbb{R}^{3})$,
  \hs{Chen-Price} \cite{[CP]},  Sohr \cite{[Sohr]}, and
 \hs{Kozono-Kim}   \cite{[KK]}   presented an improvement
of \eqref{serrin1} by
  \be \label{serrin1L}
  u\in  L^{p,\infty} (0,T;L^{q,\infty}( \mathbb{R}^{3})) ~\text{and}~  \|u\|_{L^{p,\infty} (0,T;L^{q,\infty}( \mathbb{R}^{3}))}\leq\varepsilon~ \text{with}~2/p+3/q=1, ~~q>3.
  \ee
Alternative \hs{proofs} of \eqref{serrin1L} are given in \cite{[BPR],[WWY],[GKT]}.

We turn our attention back to the  full compressible fluid \eqref{FNS}. The classical theory of  compressible flow can be found in \cite{[Lions],[Feireisl]}. Here,
we are concerned with the regularity of local
strong solutions \hs{to} equations \eqref{FNS} established by Cho and   Kim in \cite{[CKC]} (\hs{For} details, see Proposition \ref{localwith vacuum} in \hs{Section} 2).
This \hs{type of} strong \hs{solution allows} the initial data to be in vacuum.
 In the spirit of \eqref{serrin1}, many authors successfully extended Serrin type \hs{blow-up}  criteria \eqref{serrin1} to the  compressible flow (\hs{e.g.} \cite{[HLX],[HLW],[JWY],[WZ13],[CY],[LX],[SWZ1],[SWZ2],[XZ],[LX]}  and references therein).
In particular, Huang, Li and Wang \cite{[HLW]} proved the following necessary condition for \hs{blow-up of solutions} to \eqref{FNS}
\be \label{HLW}
\lim\sup\limits_{\hs{t\rightarrow
T^\star}}\left(\|\text{div\,}u\|_{L^1(0,\hs{t};\hs{L^\infty}(\mathbb{R}^{3}))}+\|u\|_{L^p(0,\hs{t};L^q(\mathbb{R}^{3}))}\right)=\infty, ~~\f2p+\f3q=1,~q>3,\ee
 where  $0<T^{\ast}<\infty$
 is the maximal time of existence of a strong  solution of system \eqref{FNS}.
Subsequently, in \cite{[HL]}, Huang and Li improved \eqref{HLW} to \be \label{HL2}
\lim\sup\limits_{t\rightarrow
T^\star}\left(\|\rho\|_{L^\infty(0,\hs{t};L^\infty(\mathbb{R}^{3}))}+\|u\|_{L^p(0,\hs{t};L^q(\mathbb{R}^{3}))}\right)=\infty, ~~\f2p+\f3q=1,~q>3.\ee
Very recently, authors in \cite{[JWY]} showed that, if $\lambda<3\mu$, \eqref{HL2} can be replaced by
\be \label{JWY}
\lim\sup\limits_{t\rightarrow
T^\star}\left(\|\rho\|_{L^\infty(0,\hs{t};L^\infty(\mathbb{R}^{3}))}+\|\theta\|_{L^p(0,\hs{t};L^q(\mathbb{R}^{3}))}\right)=\infty, ~~\f2p+\f3q=2,~q>3/2.\ee
We would like to \hs{point out} that \hs{the norms in \eqref{HLW}-\eqref{JWY} are scaling-invariant} in the sense of  \eqref{eqscaling}. Inspired by \eqref{serrin1L},    we shall prove the following result
\begin{theorem}
\label{the1.1}
Suppose that $(\rho,u,\theta)$ is the unique strong solution \hs{to (\ref{FNS})}. \hs{Then there} exists a positive constant $\varepsilon$ such that the \hs{solution} $(\rho,u,\theta)$ can be extended   beyond $t=T$ provided that one of the following two conditions holds
  \begin{enumerate}[(1)]
 \item \label{enumerate1}  $\rho \in L^{\infty}(0,T;L^{\infty}(\mathbb{R}^{3}))$,   $u\in L^{p,\infty}(0,T;L^{q,\infty}(\mathbb{R}^{3}))$ and
     \be\label{L11.12}\|u\|_{L^{p,\infty}(0,T;\hs{L^{q,\infty}(\mathbb{R}^{3})})}\leq \varepsilon, ~~\text{with}~~ {2/p}+ {3/q}=1,\ \  q>3;\ee
      \item \label{enumerate2} $\lambda<3\mu,$ $\rho \in L^{\infty}(0,T;L^{\infty}(\mathbb{R}^{3}))$, $\theta\in \hs{L^{p,\infty}}(0,T;L^{q,\infty}(\mathbb{R}^{3}))$ and
    \be\label{L11.13}\|\theta\|_{L^{p,\infty}(0,T;L^{q,\infty}(\mathbb{R}^{3}))}\leq \varepsilon, ~~\text{with}~~ {2/p}+ {3/q}=2,\ \  q>3/2.\ee
  \end{enumerate}
\end{theorem}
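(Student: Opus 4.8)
The plan is to combine the local existence and continuation theory of Cho--Kim (Proposition~\ref{localwith vacuum}) with scaling-critical \emph{a priori} estimates. By that theory it suffices to prove that, under hypothesis~(\ref{enumerate1}) or~(\ref{enumerate2}), the solution stays bounded in the natural energy spaces up to $t=T$: concretely that $\nabla u\in L^\infty(0,T;L^2)$ together with $\nabla^2u\in L^2(0,T;L^2)$, and in case~(\ref{enumerate2}) also the corresponding norms of $\theta$ and of $\sqrt\rho\,\dot\theta$; a standard higher-order estimate, completed by a Beale--Kato--Majda-type logarithmic inequality, then upgrades this to $\nabla u\in L^1(0,T;L^\infty)$ and $\rho\in C([0,T];H^2)$, which is what Proposition~\ref{localwith vacuum} needs in order to continue the solution past $T$. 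For case~(\ref{enumerate1}) I would follow the scheme of Huang--Li \cite{[HL]} that produced \eqref{HL2}, and for case~(\ref{enumerate2}) the scheme of \cite{[JWY]} behind \eqref{JWY}; the one structural change is that every Lebesgue-in-time norm of $u$ (resp.\ of $\theta$) has to be replaced by the corresponding weak-Lorentz norm, for which I would import the device used for the incompressible equations in \cite{[CP],[Sohr],[KK],[WWY]}.

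The core is a chain of energy estimates. Starting from the basic energy identity for \eqref{FNS} — which, together with $\rho\in L^\infty(0,T;L^\infty)$, controls $\sqrt\rho\,u$, $\nabla u$ and $\sqrt\rho\,\theta$ in the natural spaces and leaves room for vacuum — I would run the Hoff--Huang--Li--Xin scheme: test the momentum equation with the material derivative $\dot u=u_t+u\cdot\nabla u$ to get a differential inequality for $\int(\tfrac{\mu}{2}|\nabla u|^2+\tfrac{\mu+\lambda}{2}(\operatorname{div}u)^2-P\operatorname{div}u)$ with dissipation $\int\rho|\dot u|^2$, then test a second time to control $\int\rho|\dot u|^2$ and $\int|\nabla\dot u|^2$, and in parallel estimate $\nabla\theta$ and $\sqrt\rho\,\dot\theta$ from the temperature equation. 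In each nonlinear term the factor that sits at the critical scaling carries exactly one power of $u$ (in case~(\ref{enumerate1})) or of $\theta$ (in case~(\ref{enumerate2})), and I would peel it off by H\"older's inequality in Lorentz spaces (O'Neil's inequality), for instance
\be
\Big|\int(u\cdot\nabla u)\cdot\Delta u\,dx\Big|\le C\|u\|_{L^{q,\infty}}\,\|\nabla u\|_{L^{\frac{2q}{q-2},2}}\,\|\nabla^2u\|_{L^2}\le C\|u\|_{L^{q,\infty}}\,\|\nabla u\|_{L^2}^{1-3/q}\,\|\nabla^2u\|_{L^2}^{1+3/q},
\ee
the last step being the Lorentz-refined Gagliardo--Nirenberg inequality (and analogously for the terms built from $\theta$ when $2/p+3/q=2$). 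After Young's inequality each dangerous term reduces to $\delta\|\nabla^2u\|_{L^2}^2+C\|u(t)\|_{L^{q,\infty}}^{p}\|\nabla u\|_{L^2}^2$ (resp.\ with $\theta$), and by hypothesis the time weight $t\mapsto\|u(t)\|_{L^{q,\infty}}^{p}$ lies in $L^{1,\infty}(0,T)$ with norm $\le\varepsilon^{p}$.

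The hard part is that this time weight is only \emph{weakly} integrable, so ordinary Gr\"onwall is unavailable and the Lebesgue-in-time arguments of \cite{[HL],[JWY]} do not transfer verbatim; this is precisely the role of the smallness assumption. Following the way the analogous difficulty is handled for the incompressible equations in \cite{[KK],[WWY]}, I would combine the parabolic dissipations $\int\rho|\dot u|^2$, $\int|\nabla\dot u|^2$ (and their $\theta$-analogue) with H\"older's inequality in the \emph{time} variable between $L^{p,\infty}(0,T)$ and $L^{p',1}(0,T)$ — an allocation made legitimate by the critical relations $1-3/q=2/p$ and $1+3/q=2/p'$ — so that the critical contributions are bounded by $C\varepsilon^{p}$ times the quantities being estimated and are therefore absorbed once $\varepsilon$ is small; equivalently one runs the estimate along a finite chain of time slices on which the corresponding contribution is $\le C\varepsilon^{p}$, the finiteness of $T$ keeping the number of steps bounded. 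A further point specific to case~(\ref{enumerate2}): the quadratic source $\tfrac{\mu}{2}|\nabla u+(\nabla u)^{\mathrm{tr}}|^2+\lambda(\operatorname{div}u)^2$ and the pressure work $R\rho\theta\operatorname{div}u$ in the temperature equation must be balanced against the viscous dissipation of $u$, and it is exactly there that the structural restriction $\lambda<3\mu$ enters. With all these bounds in hand, feeding them into Proposition~\ref{localwith vacuum} completes the continuation.
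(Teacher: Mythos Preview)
Your spatial estimates via O'Neil's H\"older inequality and the Lorentz-refined Gagliardo--Nirenberg inequality are on target, and you correctly isolate the obstruction: after Young the dangerous term is $C\|u(t)\|_{L^{q,\infty}}^{p}\|\nabla u\|_{L^2}^2$ with the time weight only in $L^{1,\infty}(0,T)$. But both cures you propose for this weak-$L^1$ obstacle fail. The time-H\"older route $L^{p,\infty}\times L^{p',1}\to L^1$ would need $\|\nabla^2 u\|_{L^2}^{2/p'}\in L^{p',1}(0,T)$, whereas the dissipation only yields $\|\nabla^2 u\|_{L^2}\in L^2(0,T)$, i.e.\ $\|\nabla^2 u\|_{L^2}^{2/p'}\in L^{p'}=L^{p',p'}$, which is strictly larger; the second Lorentz index does not close. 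The ``finite chain of time slices'' alternative is equally blocked: a function in $L^{1,\infty}(0,T)$ need not have small integral on any subinterval (think of $\lambda(t)=|t-t_0|^{-1}$), so one cannot partition $(0,T)$ into finitely many pieces on each of which $\int\lambda\le\delta$. The paper in fact notes explicitly that the devices of \cite{[Sohr],[KK]} do not transfer to \eqref{FNS} because of the density.

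What the paper does instead is apply the Bosia--Pata--Robinson generalised Gr\"onwall lemma (Lemma~\ref{2.1}). Having derived $\tfrac{d}{dt}\Phi\le C\|u\|_{L^{q,\infty}}^{p}\Phi$, one re-runs the same inequality at a nearby pair $(p_\tau,q_\tau)$ still on the critical line $2/p_\tau+3/q_\tau=1$ and uses the Lorentz interpolation $\|u\|_{L^{q_\tau,\infty}}^{p_\tau}\le C\|u\|_{L^{q,\infty}}^{p(1-\tau)}\|u\|_{L^{6,\infty}}^{4\tau}\le C\|u\|_{L^{q,\infty}}^{p(1-\tau)}\Phi^{2\tau}$ (this is the content of Lemma~\ref{lemma2.2}) to reach $\tfrac{d}{dt}\Phi\le C\big(\|u\|_{L^{q,\infty}}^{p}\big)^{1-\tau}\Phi^{1+2\tau}$; Lemma~\ref{2.1} then gives boundedness of $\Phi$ provided $C\varepsilon^{p}<\tfrac12$, and one concludes via \eqref{HL2}. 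There is also a structural point you skip in case~(\ref{enumerate1}): to obtain a \emph{closed} inequality of the form $\tfrac{d}{dt}\Phi\le C\|u\|_{L^{q,\infty}}^{p}\Phi$ the paper does not estimate $\theta$ separately but works with the total energy $E=\theta+\tfrac12|u|^2$, whose equation \eqref{eqofE} lets the pressure and temperature contributions be absorbed into $\Phi$ and into the dissipation $\|\nabla E\|_{L^2}^2$.
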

\begin{remark}
Theorem \ref{the1.1} \hs{generalizes} both continuation criteria \eqref{HL2} and \eqref{JWY} to enable their space-time directions to be in Lorentz spaces.
\end{remark}

\begin{remark}
 Even if the results in this theorem reduce to the ones \hs{only involving} space direction in Lorentz spaces, they are still new.
\hs{Note that the known best blow-up criteria} based on Lorentz spaces for the   isentropic compressible Navier-Stokes flows were established by Xu and Zhang \cite{[XZ]}
 \be \label{IXZ}
\lim\sup\limits_{t\rightarrow
T^\star}\left(\|\rho\|_{L^\infty(0,t;L^\infty(\mathbb{R}^{3}))}+\|u\|_{L^p(0,t;L^{q,\infty}(\mathbb{R}^{3}))}\right)=\infty, ~~\f2p+\f3q=1,~q>3.\ee
Hence, it seems that Theorem \ref{the1.1} is the first continuation criteria allowing  the time direction to be in Lorentz spaces \hs{for} the compressible fluid.
\end{remark}
\begin{remark}
\hs{Since} $\text{div\,}u \in L^1(0,t;L^\infty(\mathbb{R}^{3}))$  yields $\rho \in L^\infty(0,t;L^\infty(\mathbb{R}^{3}))$,
$\rho \in L^{\infty}(0,t;L^{\infty}(\mathbb{R}^{3}))$ in this theorem can be replaced by $\hs{\text{div\,}u} \in L^{1}(0,t;L^{\infty}(\mathbb{R}^{3}))$. Therefore, this theorem is an extention  of \eqref{HLW}.
\end{remark}
The absolute continuity of norm in
Lorentz space $L^{p,l}(0,T) (l<\infty)$ together with Theorem \ref{the1.1} yields that
\begin{coro}
Suppose $(\rho,u,\theta)$ is the unique strong solution \hs{to (\ref{FNS})}. \hs{Then the strong solution can be extended beyond $t=T$
if one of  the \hs{following two} conditions hold:} for $l<\infty$,
\begin{enumerate}[(1)]
 \item  $\rho \in L^{\infty}(0,T;L^{\infty}(\mathbb{R}^{3}))$,   $u\in L^{p,l}(0,T;L^{q,\infty}(\mathbb{R}^{3}))  \hs{}
     ~~\text{with}~~ {2/p}+ {3/q}=1,\ \  q>3;$
      \item $\lambda<3\mu,$ $\rho \in L^{\infty}(0,T;L^{\infty}(\mathbb{R}^{3}))$, $\theta\in L^{p,l}(0,T;L^{q,\infty}(\mathbb{R}^{3}))  ~~\text{with}~~ {2/p}+ {3/q}=2,\ \  q>3/2.$
  \end{enumerate}
\end{coro}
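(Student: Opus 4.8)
The plan is to deduce the corollary from Theorem \ref{the1.1} by exploiting the absolute continuity, in the time variable, of the Lorentz norm $L^{p,l}$ with $l<\infty$. Recall first the elementary embedding $L^{p,l}(0,T)\hookrightarrow L^{p,\infty}(0,T)$, valid for every $l\le\infty$, with $\|f\|_{L^{p,\infty}(0,T)}\le C\|f\|_{L^{p,l}(0,T)}$; applying this in time with values in the fixed space $L^{q,\infty}(\mathbb{R}^3)$ gives
\[
\|u\|_{L^{p,\infty}(a,b;L^{q,\infty}(\mathbb{R}^3))}\le C\,\|u\|_{L^{p,l}(a,b;L^{q,\infty}(\mathbb{R}^3))}
\]
for every subinterval $(a,b)\subset(0,T)$, with $C$ independent of $(a,b)$.

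Next, since $l<\infty$, the quasi-norm $\|u\|_{L^{p,l}(T-\delta,T;L^{q,\infty})}$ tends to $0$ as $\delta\downarrow 0$: writing $g_\delta=u\chi_{(T-\delta,T)}$ one has $g_\delta^*(t)\le u^*(t)$ with $g_\delta^*$ supported on $[0,\delta]$, so that $\|g_\delta\|_{L^{p,l}}^l\lesssim\int_0^{\delta}\big(t^{1/p}u^*(t)\big)^l\,\frac{dt}{t}\to 0$ by dominated convergence (the integrand being integrable on $(0,\infty)$ because $u\in L^{p,l}(0,T;L^{q,\infty})$). Hence we may fix $\delta>0$ so small that $C\,\|u\|_{L^{p,l}(T-\delta,T;L^{q,\infty})}\le\varepsilon$, where $\varepsilon$ is the absolute constant provided by Theorem \ref{the1.1}. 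Combined with the embedding above this yields $\|u\|_{L^{p,\infty}(T-\delta,T;L^{q,\infty})}\le\varepsilon$, while trivially $\rho\in L^{\infty}(0,T;L^{\infty})\subset L^{\infty}(T-\delta,T;L^{\infty})$.

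Now set $t_0:=T-\delta$ and restart the solution from $t_0$. Since $(\rho,u,\theta)$ is already the strong solution on $[0,T)$, at the positive time $t_0$ the triple $(\rho,u,\theta)(\cdot,t_0)$ is admissible initial data (the required compatibility condition is automatic, as is standard in such arguments) in the sense of the local existence theory (Proposition \ref{localwith vacuum}), and on $[t_0,T]$ our solution is the unique strong solution with these data. It satisfies hypothesis \eqref{L11.12} of Theorem \ref{the1.1} on the interval $(t_0,T)$ by the previous paragraph, so Theorem \ref{the1.1} (applied with $(0,T)$ replaced by $(t_0,T)$) shows the solution can be continued beyond $t=T$. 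This proves case (1); case (2) is identical, replacing $u$ by $\theta$, the scaling relation by $2/p+3/q=2$ with $q>3/2$, adding the structural restriction $\lambda<3\mu$, and using \eqref{L11.13} in place of \eqref{L11.12}.

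The only genuinely substantive point is the absolute-continuity step, i.e.\ that shrinking the time interval to a point drives the $L^{p,l}$-in-time quasi-norm to zero when $l<\infty$; this is exactly the property that fails for $l=\infty$, which is why the smallness assumption is indispensable in Theorem \ref{the1.1} but can be dropped here. Everything else is a direct application of Theorem \ref{the1.1} after translating the initial time, so no further obstacle is expected.
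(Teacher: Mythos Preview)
Your argument is correct and follows the same route the paper indicates: the paper simply states that the corollary follows from the absolute continuity of the $L^{p,l}(0,T)$ norm for $l<\infty$ together with Theorem \ref{the1.1}, and you have spelled out precisely this mechanism (shrinking the time interval near $T$ to recover the smallness hypothesis, then invoking Theorem \ref{the1.1}). Your added restart-at-$t_0$ justification is more detailed than what the paper provides but is the natural way to make the one-line reduction rigorous.
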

Owing to the  existence of density in \eqref{FNS},
it seems to be difficult to apply the regularity estimates for \hs{heat equations} and vorticity equations of \eqref{NS} as used in \cite{[Sohr],[KK]} to show Theorem \ref{the1.1}. Our strategy
is to adopt the method introduced by Bosia, Pata and Robinson in \cite{[BPR]} and recently developed in \cite{[JWW]}. In this procedure, the key point  is to derive an estimate in terms of the \hs{following form}, for $\phi(t)\geq0$,
 \be\label{bprjww}
 \f{d}{dt}\phi(t)\leq C\| u\|^{\f{2q}{q-3}}_{L^{q,\infty}(\mathbb{R}^{3})} \phi(t).\ee
We observed that, by means of temperature equation, the critical estimate for \eqref{JWY} in \cite{[JWY]} is that, there exists
$\psi(t)\geq0$ such that
$$
\f{d}{dt}\psi(t)\leq C\| \theta\|^{\f{2q}{2q-3}}_{L^{q}(\mathbb{R}^{3})} \psi(t).
$$
Based on this, invoking the total energy $E=\theta+\f{|u|^{2}}{2}$ as \cite{[HL]}, we can obtain energy estimates in terms of \eqref{bprjww}. Then, Sobolev's  inequality in Lorentz spaces and
boundedness of Riesz transform in Lorentz spaces    allow us to further derive that
$$ \f{d}{dt}\phi(t)\leq C\| u\|^{\f{2q(1-\tau)}{q-3}}_{L^{q,\infty}(\mathbb{R}^{3})}\phi^{1+2\tau}(t).$$
\hs{Subsequently,} we can apply the general
Gronwall inequality in Lemma \ref{2.1} and  Lemma \ref{lemma2.2} to complete the proof.
Finally, we would like to point out that it is unknown whether \eqref{bprjww} holds for the isentropic compressible Navier-Stokes system.

\hs{On the other hand}, authors in \cite{[WWZ]} \hs{recently} obtained the following local regularity criteria in  anisotropic Lebesgue spaces
\hs{for} suitable weak solutions \hs{to} the incompressible Navier-Stokes system \eqref{NS}
\be\label{spe1}
u\in L^{p}_{t}L^{\overrightarrow{q}}_{x}(Q(\varrho)), ~~~~ \text{with} ~~~ \f{2}{p}+\sum^{3}_{j=1}\f{1}{q_{j}}=1, ~~ 1< q_{j}<\infty.\ee
This enlightened  us to  consider the regularity of strong solutions \hs{to} the  compressible Navier-Stokes equations \eqref{FNS} in anisotropic Lebesgue spaces. Before \hs{formulating} our result, we write
\be
\label{xianzhifanwei} \Xi=\{(q_{1},q_{2},q_{3}):\f{1}{q_{i}}+\f{1}{q_{j}}\neq\f{1}{q_{k}}\hs{,}~ 1<q_{i},q_{j},q_{k}< \infty\hs{,~} i,j,k\in\hs{\{1,2,3\}}\}.\ee
\begin{theorem}\label{the1.2}
Let $(\rho,u,\theta)$ be the unique strong solution  \hs{to (\ref{FNS}) and $\overrightarrow{q}=(q_{1},q_{2},q_{3})$. Then,}
  \begin{enumerate}[(1)]
 \item \be\label{genwithout1c1.20}\limsup_{t\rightarrow T^*}\B(
  \|\rho\|_{L^{\infty}(0,\hs{t};L^{\infty}(\mathbb{R}^{3}))}+ \|  u \|_{L^{p }(0,t;L^{\overrightarrow{q} } (\mathbb{R}^{3}) )} \B)= \infty,
   \ee
where the pair $(p ,\,\overrightarrow{q} )$  meets \eqref{xianzhifanwei} and
$$
  \f{2}{p }+\f{1}{q_{1} }+\f{1}{q_{2} }+\f{1}{q_{3} }=1,~~1<q_{i}<\infty\hs{;}$$
  \item \hs{for} $\lambda<3\mu,$
  \be\label{genwithout1c1.21}\limsup_{t\rightarrow T^*}\B(
  \|\rho\|_{L^{\infty}(0,\hs{t};L^{\infty}(\mathbb{R}^{3}))}+ \|  \theta \|_{L^{p }(0,t;L^{\overrightarrow{q}}(\mathbb{R}^{3}) ) } \B)= \infty,
   \ee
where the pair \hs{$(p ,\,\overrightarrow{q} )$  meets} \eqref{xianzhifanwei} and
$$
  \f{2}{p }+\f{1}{q_{1} }+\f{1}{q_{2} }+\f{1}{q_{3} }=2,~~1<q_{i}<\infty.
  $$\end{enumerate}
\end{theorem}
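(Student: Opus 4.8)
The proof runs parallel to that of Theorem~\ref{the1.1}: the compressible structure is handled by the scheme of \cite{[BPR],[HL]} and the new ingredient, adapted from the incompressible criterion \eqref{spe1} of \cite{[WWZ]}, is a family of anisotropic Gagliardo--Nirenberg inequalities. I argue by contradiction. Let $0<T^{\ast}<\infty$ be the maximal time of existence of the strong solution supplied by Proposition~\ref{localwith vacuum}, and suppose the left-hand side of \eqref{genwithout1c1.20} (resp.\ \eqref{genwithout1c1.21}) stays finite, i.e.\ $\rho\in L^{\infty}(0,T^{\ast};L^{\infty}(\mathbb{R}^{3}))$ together with $u\in L^{p}(0,T^{\ast};L^{\overrightarrow{q}}(\mathbb{R}^{3}))$ (resp.\ $\theta\in L^{p}(0,T^{\ast};L^{\overrightarrow{q}}(\mathbb{R}^{3}))$ with $\lambda<3\mu$), where $2/p+\sum_{j}1/q_{j}=1$ (resp.\ $=2$). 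It suffices to show that under these hypotheses the norms entering the continuation criterion of Proposition~\ref{localwith vacuum} remain bounded on $[0,T^{\ast})$, contradicting the maximality of $T^{\ast}$. Note that the density bound --- the genuinely delicate point in the compressible theory --- is here part of the hypothesis, so the task reduces to the velocity (and, in part (2), temperature) a priori estimates.

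Next, the energy hierarchy. Starting from conservation of the total energy and the basic $L^{2}$ bounds, one tests the momentum equation in \eqref{FNS} with $u_{t}$ (equivalently with $\dot u=u_{t}+u\cdot\nabla u$) and integrates by parts, obtaining
\[
\frac{d}{dt}\int_{\mathbb{R}^{3}}\!\big(\mu|\nabla u|^{2}+(\mu+\lambda)(\mathrm{div}\,u)^{2}\big)\,dx+\int_{\mathbb{R}^{3}}\!\rho|u_{t}|^{2}\,dx\le C\|\rho\|_{L^{\infty}}\!\int_{\mathbb{R}^{3}}\!|u|^{2}|\nabla u|^{2}\,dx+\mathcal{P},
\]
where $\mathcal{P}$ is the pressure-work term; in part (2) one absorbs $\mathcal{P}$ and closes the temperature estimate by testing the third equation of \eqref{FNS} with $\theta_{t}$ and working with the total energy $E=\theta+|u|^{2}/2$ as in \cite{[HL]}, the constraint $\lambda<3\mu$ being used precisely to keep the dissipation $\tfrac{\mu}{2}|\nabla u+(\nabla u)^{\mathrm{tr}}|^{2}+\lambda(\mathrm{div}\,u)^{2}$ coercive enough to dominate $P\,\mathrm{div}\,u$. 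The crux is to control the ``bad'' terms $\int|u|^{2}|\nabla u|^{2}$ (and its temperature analogue) by the exactly scaling-invariant quantity times an absorbable fraction of the second-order dissipation. Using anisotropic Hölder's inequality to split the product, distributing integrability and derivatives among the three coordinate directions, and then invoking the anisotropic Gagliardo--Nirenberg inequality of the form
\[
\|\nabla u\|_{L^{\overrightarrow{r}}(\mathbb{R}^{3})}\le C\,\|\nabla u\|_{L^{2}(\mathbb{R}^{3})}^{\,1-\sigma}\,\|\nabla^{2}u\|_{L^{2}(\mathbb{R}^{3})}^{\,\sigma},\qquad \sigma=\tfrac12\textstyle\sum_{j=1}^{3}\tfrac1{q_{j}},
\]
which is valid exactly under the restriction \eqref{xianzhifanwei} (it guarantees the coordinatewise interpolation exponents are admissible), Young's inequality yields $\int|u|^{2}|\nabla u|^{2}\le\varepsilon\|\nabla^{2}u\|_{L^{2}}^{2}+C\|u\|_{L^{\overrightarrow{q}}}^{p}\|\nabla u\|_{L^{2}}^{2}$ with $p$ fixed by $2/p+\sum_{j}1/q_{j}=1$. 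The term $\|\nabla^{2}u\|_{L^{2}}^{2}$ is then absorbed via the $H^{2}$ elliptic estimate for the Lamé operator applied to $\mu\Delta u+(\mu+\lambda)\nabla\mathrm{div}\,u=\rho\dot u+\nabla P$, modulo controllable lower-order terms.

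Finally, one closes the argument. Collecting the estimates gives $\frac{d}{dt}\Phi(t)+\mathcal{D}(t)\le C\,g(t)\,\Phi(t)+(\text{l.o.t.})$, where $\Phi=\|\nabla u\|_{L^{2}}^{2}$ (plus $\|\nabla\theta\|_{L^{2}}^{2}$ in part (2)), $\mathcal{D}$ is the second-order dissipation, and $g(t)=\|u\|^{p}_{L^{\overrightarrow{q}}_{x}}(t)$ (resp.\ $\|\theta\|^{p}_{L^{\overrightarrow{q}}_{x}}(t)$) belongs to $L^{1}(0,T^{\ast})$ by hypothesis. Unlike the Lorentz case of Theorem~\ref{the1.1}, the weight $g$ is already integrable, so the classical Gronwall inequality --- no absolute-continuity refinement (Lemmas~\ref{2.1}--\ref{lemma2.2}) is needed --- yields $\Phi\in L^{\infty}(0,T^{\ast})$ and $\mathcal{D}\in L^{1}(0,T^{\ast})$. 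A standard bootstrap then propagates the remaining bounds ($\|\nabla u\|_{L^{1}_{t}L^{\infty}_{x}}$, $\|\nabla\rho\|_{L^{\infty}_{t}(L^{2}\cap L^{q})}$, $\|\rho_{t}\|$, and the higher Sobolev norms of $(\rho,u,\theta)$ demanded by Proposition~\ref{localwith vacuum}), so the solution extends beyond $T^{\ast}$, a contradiction. The main obstacle is the middle step: rewriting the convective (and pressure-work) terms in the precise form $\|u\|^{p}_{L^{\overrightarrow{q}}}\cdot(\text{absorbable dissipation})$, which hinges on the sharp anisotropic Gagliardo--Nirenberg inequalities and is exactly where the technical condition \eqref{xianzhifanwei} is indispensable; a secondary difficulty in part (2) is making the temperature estimate self-contained, which forces $\lambda<3\mu$.
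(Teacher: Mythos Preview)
Your outline has the right overall shape (energy identity, anisotropic H\"older/interpolation, Young, Gronwall), and you correctly note that since $g(t)=\|u(t)\|_{L^{\overrightarrow{q}}}^{p}$ is already in $L^{1}(0,T^{\ast})$ the classical Gronwall suffices. But there is a genuine gap in how you propose to close the estimate for $\int|u|^{2}|\nabla u|^{2}$ in part~(1).

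You interpolate $\|\nabla u\|_{L^{\overrightarrow{r}}}$ between $\|\nabla u\|_{L^{2}}$ and $\|\nabla^{2}u\|_{L^{2}}$, and then say ``$\|\nabla^{2}u\|_{L^{2}}^{2}$ is absorbed via the $H^{2}$ elliptic estimate for the Lam\'e operator applied to $\mu\Delta u+(\mu+\lambda)\nabla\mathrm{div}\,u=\rho\dot u+\nabla P$, modulo controllable lower-order terms.'' In the compressible setting this does not work: the Lam\'e estimate gives $\|\nabla^{2}u\|_{L^{2}}\le C(\|\rho\dot u\|_{L^{2}}+\|\nabla P\|_{L^{2}})$, and $\nabla P=R(\theta\nabla\rho+\rho\nabla\theta)$ contains $\nabla\rho$, which is \emph{not} controlled at this level of the hierarchy. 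Treating $\|\nabla P\|_{L^{2}}$ as a ``lower-order term'' is exactly the step that fails; this is the whole reason the compressible theory is harder than the nonhomogeneous incompressible case you implicitly have in mind. The paper circumvents this by never invoking $\|\nabla^{2}u\|_{L^{2}}$: it uses the effective viscous flux $G=(2\mu+\lambda)\mathrm{div}\,u-P$ and $\mathrm{curl}\,u$, for which $\Delta G=\mathrm{div}(\rho\dot u)$ and $-\mu\Delta(\mathrm{curl}\,u)=\nabla\times(\rho\dot u)$, to obtain $\|\nabla u\|_{L^{\overrightarrow{m}}}\le C(\|\rho\dot u\|_{L^{2}}+\|\nabla E\|_{L^{2}})$ directly, with $E=\theta+\tfrac12|u|^{2}$. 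The absorbable right-hand side is then $\|\sqrt{\rho}\,\dot u\|_{L^{2}}^{2}+\|\nabla E\|_{L^{2}}^{2}$, both of which are genuine dissipation terms in the energy identity (the latter coming from multiplying the equation for $\rho E$ by $E$). Relatedly, your pressure-work term $\mathcal{P}$ in part~(1) is not disposed of: the paper handles $\int P\,\mathrm{div}\,u_{t}$ by rewriting $P_{t}$ through $(\rho E)_{t}$ and the effective viscous flux, which is precisely where the coupling to $\|\nabla E\|_{L^{2}}^{2}$ and $\int\rho|u|^{2}E^{2}$ enters.

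Two smaller points. First, the restriction \eqref{xianzhifanwei} is not needed for the anisotropic Gagliardo--Nirenberg inequality itself; it enters because the paper must apply the boundedness of the Riesz transform in $L^{\overrightarrow{m}}$ (to pass from $\nabla u$ to $\mathrm{div}\,u$ and $\mathrm{curl}\,u$), and the intermediate exponent $\overrightarrow{m}$ degenerates exactly when $1/q_{i}+1/q_{j}=1/q_{k}$. Second, in part~(2) the paper does not test the temperature equation with $\theta_{t}$; the functional is $\Psi(t)\sim\int(\mu|\nabla u|^{2}+\rho\theta^{2}+\rho|u|^{4})$, obtained by testing with $\theta$ and with $|u|^{2}u$ (this is where $\lambda<3\mu$ is used), and the bad terms are $\int\rho\theta^{3}$ and $\int\rho|u|^{2}\theta^{2}$ rather than $\int|u|^{2}|\nabla u|^{2}$.
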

\begin{remark}
In \hs{contrast} with \eqref{spe1}, \eqref{genwithout1c1.20} requires the additional condition \eqref{xianzhifanwei}. The reason is \hs{due to} the application of the boundness of singular integral operator in anisotropic Lebesgue spaces.
\end{remark}
Next, we \hs{demonstrate that} the above proof can be applied to the following nonhomogeneous   incompressible Navier-Stokes equations
\be\left\{\ba\label{NNS}
&\rho_t+\nabla \cdot (\rho u)=0\\
&\rho u_{t}+ \rho u\cdot\ti
u -\Delta  u  +\nabla \Pi=0, \\
&\Div u=0,\\
&u|_{t=0}=u_0,
\ea\right.\ee
The local existence of strong solutions \hs{to} \eqref{NNS} in sense of Proposition  \ref{localwith vacuum nonns} was established  by Choe and   Kim in \cite{[CK]}. In addition, they \hs{showed},
 if $T^{\ast}$ is the finite blow-up time, then
$$\limsup_{t\rightarrow T^*}\|\nabla u\|_{L^{4}(0,t;L^{2}(\mathbb{R}^{3}) )}=\infty.$$
 Kim \cite{[Kim]} extended Serrin type \hs{blow-up} criteria to    system \eqref{NNS} and  showed
\be\label{kim}
\limsup_{t\rightarrow T^*}\|u\|_{L^{p}(0,t;L^{q,\infty}(\mathbb{R}^{3}))}=\infty,~~\text{with} ~~ 2/p+3/q=2,~ \hs{q>3}.
\ee
Our result \hs{concerning}   nonhomogeneous incompressible
 Navier-Stokes equations \eqref{NNS} is
\begin{theorem}
\label{the1.4}
Assume that $(\rho,u)$ is the unique strong
 solution \hs{to \eqref{NNS}.} \hs{Then there} exists a positive constant $\varepsilon$ such that the \hs{solution $(\rho,u)$} can be extended   beyond $t=T$ provided that \hs{one of the following three conditions holds}
  \begin{enumerate}[(1)]
 \item    $u\in L^{p,\infty}(0,T;L^{q,\infty}(\mathbb{R}^{3}))$ and
     \be\label{L11.24}\|u\|_{L^{p,\infty}(0,T;L^{q,\infty}(\mathbb{R}^{3}))}\leq \varepsilon, ~~\text{with}~~ {2/p}+ {3/q}=1,\ \  q>3;\ee
      \item
    $u\in L^{p}(0,T;L^{\overrightarrow{q}}(\mathbb{R}^{3}))  ~~\text{with}~~ {2/p}+ {1/q_{1}}+ {1/q_{2}}+ {1/q_{3}}=2,\ \  1<q_{i}<\infty \hs{;}$
    \item\label{enumeratethe1.43}
    $\nabla u\in L^{2}(0,T;L^{3}(\mathbb{R}^{3})).$
  \end{enumerate}
\end{theorem}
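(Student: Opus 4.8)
The plan is to mirror the strategy used for Theorem \ref{the1.1}, adapted to the nonhomogeneous incompressible system \eqref{NNS}, and to reduce each of the three conditions to an energy inequality of the form $\frac{d}{dt}\phi(t)\leq C\,g(t)\,\phi(t)$ (or a slightly superlinear variant) with $g\in L^1(0,T)$, after which the generalized Gronwall inequality (Lemma \ref{2.1}) and Lemma \ref{lemma2.2} close the argument. A preliminary observation is that the density equation $\rho_t+\nabla\cdot(\rho u)=0$ with $\Div u=0$ gives immediately $\|\rho(t)\|_{L^\infty}=\|\rho_0\|_{L^\infty}$ for all $t$, so — unlike the compressible case — no hypothesis on $\rho$ is needed; vacuum is preserved but harmless. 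The basic energy identity $\frac{d}{dt}\int\rho|u|^2 + 2\int|\nabla u|^2 = 0$ is also available. The core of the proof is then a higher-order estimate: one tests the momentum equation with $u_t$ (or equivalently differentiates and tests with $\dot u=u_t+u\cdot\nabla u$, the material derivative, following Choe–Kim \cite{[CK]}) to obtain
\be\label{plan-he}
\frac{d}{dt}\int|\nabla u|^2\,dx + \int \rho|\dot u|^2\,dx \leq C\int \rho|u|^2|\nabla u|^2\,dx + (\text{lower-order terms}),
\ee
and one must absorb the troublesome term $\int\rho|u|^2|\nabla u|^2$ using Hölder, the $L^\infty$ bound on $\rho$, Sobolev embedding, and elliptic regularity for the Stokes system $-\Delta u+\nabla\Pi=-\rho\dot u$, $\Div u=0$, which yields $\|\nabla^2 u\|_{L^2}+\|\nabla\Pi\|_{L^2}\lesssim \|\rho\dot u\|_{L^2}\lesssim \|\rho\|_{L^\infty}^{1/2}\|\sqrt\rho\,\dot u\|_{L^2}$.

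For condition (1), I would bound $\int\rho|u|^2|\nabla u|^2$ by $\|\rho\|_{L^\infty}\|u\|_{L^{q,\infty}}^2\|\nabla u\|_{L^{2q/(q-2),?}}^2$ via the Lorentz-space Hölder inequality, then interpolate $\nabla u$ between $L^2$ and $L^6$ (using $\|\nabla u\|_{L^6}\lesssim\|\nabla^2 u\|_{L^2}$ and the Stokes estimate) so that the top-order quantity $\|\sqrt\rho\,\dot u\|_{L^2}^2$ appears with a small coefficient proportional to $\|u\|_{L^{q,\infty}}^{\text{(power)}}$; here the smallness hypothesis $\|u\|_{L^{p,\infty}_tL^{q,\infty}_x}\leq\varepsilon$ is what makes that coefficient absorbable, exactly as \eqref{bprjww} is used in the compressible setting. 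This produces $\frac{d}{dt}\phi\leq C\|u\|_{L^{q,\infty}}^{2q/(q-3)}\phi$ with $\phi=1+\|\nabla u\|_{L^2}^2$ and the exponent-$2q/(q-3)$ power of $\|u\|_{L^{q,\infty}}$ being $L^1$ in time precisely because $2/p+3/q=1$; Lemma \ref{2.1}/\ref{lemma2.2} then give $\phi\in L^\infty(0,T)$, which together with the Stokes estimate bootstraps to all the norms in Proposition \ref{localwith vacuum nonns}, allowing continuation past $T$. For condition (2), the same higher-order estimate is used, but the term $\int\rho|u|^2|\nabla u|^2$ is controlled directly in the anisotropic Lebesgue scale: write $\int|u|^2|\nabla u|^2\leq \|u\|_{L^{\overrightarrow q}}^2\|\nabla u\|_{L^{\overrightarrow r}}^2$ with the conjugate exponent vector, apply the anisotropic Gagliardo–Nirenberg/Sobolev inequality (in the form used for \eqref{spe1} in \cite{[WWZ]}) to interpolate $\nabla u$ between $L^2$ and $\nabla^2 u\in L^2$, and conclude $\frac{d}{dt}\phi\leq C\|u\|_{L^{\overrightarrow q}}^{p}\phi$ with $\|u\|_{L^{\overrightarrow q}}^p\in L^1(0,T)$ by the scaling relation $2/p+\sum 1/q_j=2$; no smallness is needed here since the condition already involves a norm that blows up. Condition (3) is the simplest: $\nabla u\in L^2_tL^3_x$ lets one estimate $\int\rho|u|^2|\nabla u|^2\leq\|\rho\|_{L^\infty}\|u\|_{L^6}^2\|\nabla u\|_{L^3}^2\lesssim\|\nabla u\|_{L^2}^2\|\nabla u\|_{L^3}^2$ directly by Sobolev, giving $\frac{d}{dt}\|\nabla u\|_{L^2}^2\leq C\|\nabla u\|_{L^3}^2\|\nabla u\|_{L^2}^2$ with $\|\nabla u\|_{L^3}^2\in L^1(0,T)$, and Gronwall finishes it.

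The main obstacle, as in \cite{[BPR],[JWW]} and Theorem \ref{the1.1}, is the careful splitting and absorption in the higher-order estimate \eqref{plan-he} for case (1): one must choose the interpolation exponents so that after using the Stokes elliptic estimate the genuinely top-order term $\|\sqrt\rho\,\dot u\|_{L^2}^2$ is multiplied by a quantity that is both small (via $\varepsilon$) and integrable in time, and the resulting differential inequality is only \emph{nearly} linear — of the shape $\frac{d}{dt}\phi\leq C\,g(t)\,\phi^{1+2\tau}$ with $g$ depending on $\|u\|_{L^{q,\infty}}$ to a fractional power — so the borderline nature forces one to invoke the general Gronwall lemma (Lemma \ref{2.1}) together with Lemma \ref{lemma2.2} rather than the classical one; verifying that the hypotheses of those lemmas are met with the correct exponents is the delicate bookkeeping. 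A secondary technical point is that the presence of vacuum means one cannot divide the momentum equation by $\rho$, so all estimates must be written in the weighted form $\sqrt\rho\,\dot u$, $\sqrt\rho\,u_t$ and the Stokes regularity must be applied in the form $-\Delta u+\nabla\Pi=-\rho\dot u$; this is standard after \cite{[CK]} but must be carried through consistently. Once $\phi\in L^\infty(0,T)$ is established in each case, the continuation beyond $T$ is routine: it follows by re-solving the local existence problem of Proposition \ref{localwith vacuum nonns} with initial data at a time close to $T$, whose norms are controlled by $\phi$ and the bootstrapped estimates.
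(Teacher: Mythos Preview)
Your proposal is correct and follows essentially the same route as the paper: test the momentum equation with $u_t$, use the Stokes estimate $\|\nabla^2 u\|_{L^2}\lesssim\|\rho u_t\|_{L^2}+\|\rho u\cdot\nabla u\|_{L^2}$ to obtain the key inequality \eqref{nnskey}, and then bound $\int\rho|u|^2|\nabla u|^2$ in each of the three cases exactly as you describe (Lorentz H\"older plus interpolation and Lemma~\ref{2.1}/\ref{lemma2.2} for (1), anisotropic H\"older/Sobolev for (2), direct Sobolev for (3)). One small clarification: in case (1) the absorption of the top-order term $\|\nabla^2 u\|_{L^2}^2$ is done by Young's inequality with a \emph{fixed} small constant, not by the smallness of $\|u\|_{L^{p,\infty}_tL^{q,\infty}_x}$; the smallness hypothesis enters only afterwards, as the condition $\mu\|\lambda\|_{L^{1,\infty}}<\tfrac12$ in Lemma~\ref{2.1}, which is precisely what you say in your second paragraph.
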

\begin{remark}
Theorem \ref{the1.4} is an improvement of \eqref{kim}.
In addition, collecting \eqref{kim}, \eqref{enumeratethe1.43} in Theorem \ref{the1.4} and
 Gagliardo-Nirenberg  inequality, we obtain an   alternative condition of \eqref{kim} below
 \be\label{de v}\limsup_{t\rightarrow T^*}  \|\nabla u \|_{L^{p}(0,t;L^{q}(\mathbb{R}^{3}))}
  = \infty,~~{\text with}~~\f{2}{p}+\f{3}{q}=2,~ 3/2<q\leq 3 .\ee
\end{remark}
\begin{remark}
Very recently, Wang \cite{[Wang]} proved \eqref{kim}
 \hs{is also valid} for nonhomogeneous incompressible
 Navier-Stokes equations with heat conducting. It is worth remarking that \hs{Theorem \ref{the1.4} also holds} for this system.
We leave this to the interested readers.
\end{remark}

The remaining part of this paper \hs{is} organized as follows.
In the next section, some \hs{materials} \hs{involving}
Lorentz spaces and anisotropic Lebesgue spaces are listed.
\hs{Several auxiliary lemmas} are also given.
Then, we recall the local \hs{well-posedness}
of equations \eqref{FNS} and \eqref{NNS}, respectively.
 \hs{Section 3 is devoted to proving Theorem \ref{the1.1} and Theorem \ref{the1.2}.}
 In \hs{Section 4}, we are concerned with lifespan
 of nonhomogeneous
incompressible Navier-Stokes equations.

\section{Preliminaries} \label{section2}

First, we introduce some notations used in this paper.
 For $p\in [1,\,\infty]$, the notation $L^{p}(0,T;X)$ stands for the set of measurable functions $f(x,t)$ on the interval $(0,T)$  with values in $X$ and $\|f(\cdot,t)\|_{X}$ belonging to $L^{p}(0,T)$.
 The classical Sobolev space $W^{k,\hs{p}}(\mathbb{R}^{3})$ is equipped with the norm $\|f\|_{W^{k,\hs{p}}(\mathbb{R}^{3})}=\sum\limits_{\wg{|\alpha| =0}}^{k}\|D^{\alpha}f\|_{L^{\hs{p}}(\mathbb{R}^{3})}$. \hs{$|E|$ represents the $n$-dimensional Lebesgue measure of a set $E\subset \mathbb{R}^{n}$.}
 We will use the summation convention on repeated indices.
 $C$ is an absolute constant which may be different from line to line unless otherwise stated in this paper.

A function $f$ belongs to  the homogeneous Sobolev spaces $D^{k,l}\hs{(\mathbb{R}^3)}$
if $
u\in L^1_{\rm{loc}}(\mathbb{R}^3)$ \hs{such that} $\|\nabla^k u \|_{L^l\hs{(\mathbb{R}^3)}}<\infty.$
For simplicity,   we write
$$   \ H^k=W^{k,2}(\mathbb{R}^3), \ D^k=D^{k,2}(\mathbb{R}^3).$$

 We denote \hs{$G$ as} the  effective viscous flux, that is, \be\label{evf} G=(2\mu+\lambda)\text{div\,}u-P.\ee
The notation $\dot{v}=v_t+u\cdot\nabla v$  stands for material derivative.
Note that
$$
\Delta G=\text{div}(\rho \dot{u})~~~~ \text{and }~~~~ -\mu\Delta(\text{curl} u)=\nabla\times(\rho\dot{u} ).
$$
It is well-known that
\be \label{ntle}
\ba
\|\nabla G\|_{L^{p}}\leq C\|\rho \dot{u}\|_{L^{p}},   \ \|\nabla \text{curl} u\|_{L^{p}}\leq C\|\rho \dot{u}\|_{L^{p}},\forall p\in \hs{(1,\infty)}.
\ea
\ee

\subsection{Some basic facts on Lorentz spaces}
Next, we present some basic facts on Lorentz spaces.
For $p,q\in[1,\infty]$, we define
$$
\|f\|_{L^{p,q}(\mathbb{R}^{3})}=\left\{\ba
&\B(p\int_{0}^{\infty}\alpha^{q}|\{x\in \hs{\mathbb{R}^{3}}:|f(x)|>\alpha\}|^{\f{q}{p}}\f{d\alpha}{\alpha}\B)^{\f{1}{q}} , ~~~q<\infty, \\
 &\sup_{\alpha>0}\alpha|\{x\in \hs{\mathbb{R}^{3}}:|f(x)|>\alpha\}|^{\f{1}{p}} ,~~~q=\infty.
\ea\right.
$$	
Furthermore,
$$
L^{p,q}(\mathbb{R}^{3})=\big\{f: f~ \text{is  a measurable function  on}~ \hs{\mathbb{R}^{3}} ~\text{and} ~\|f\|_{L^{p,q}(\mathbb{R}^{3})}<\infty\big\}.
$$ 		
Similarly, one can  define
Lorentz spaces $L^{p,q}(0,T;X)$ in time for $ p\leq q \leq\infty$. $f\in L^{p,   q}(0,T;X)$ means that $\|f\|_{L^{p,q}(0,T;X)}<\infty$, where
$$\|f\|_{L^{p,q}(0,T;X)}=\left\{\ba
&\B( p \int_{0}^{\infty}\alpha^q|\{t\in(0,T)
:\|f(t)\|_{X}>\alpha\}|^{\f{q}{p}}\f{d\alpha}{\alpha}\B)^{\f{1}{q}} , ~~~q<\infty, \\
 &\sup_{\alpha>0}\alpha|\{t\in(0,T)
:\|f(t)\|_{X}>\alpha\}|^{\f{1}{p}} ,~~~q=\infty.\ea\right.
$$
We list the properties of Lorentz spaces  as follows.
\begin{itemize}
\item
H\"older's inequality in Lorentz spaces  \cite{[Neil]}
\be\label{hiL}\ba
 &\|fg\|_{L^{r,s}(\mathbb{R}^{n})}\leq \|f\|_{L^{r_{1},s_{1}}(\mathbb{R}^{n})}\|g\|_{L^{r_{2},s_{2}}(\mathbb{R}^{n})},
\\
&\f{1}{r}=\f{1}{r_{1}}+\f{1}{r_{2}},~~\f{1}{s}=\f{1}{s_{1}}+\f{1}{s_{2}}.
\ea\ee
\item Interpolation characteristic of Lorentz spaces \cite{[BL]}
\be\label{Interpolation characteristic}
(L^{p_{0},q_{0}}(\mathbb{R}^{n}),L^{p_{1},q_{1}}(\mathbb{R}^{n}))_{\alpha,q}=L^{p,q}(\mathbb{R}^{n})
~~~~\text{with}~~~ \f{1}{p}=\f{1-\alpha}{p_{0}}+\f{\alpha}{p_{1}},~0<\alpha<1.\ee

\item
Boundedness of Riesz Transform in Lorentz spaces \cite{[CF]}
\be\|R_{j}f\|_{L^{p,q}(\mathbb{R}^{n})}\leq C\| f\|_{L^{p,q}(\mathbb{R}^{n})},~1<p<\infty.\label{brl}\ee

\item

The Lorentz spaces $L^{p,r}$ increase as the exponent $r$ increases \cite{[Grafakos],[Maly]}

For $1\leq p\leq\infty$ and $1\leq r_{1}<r_{2}\leq\infty,$
\be\label{Lorentzincrease}
\|f\|_{L^{p,r_{2}}(\mathbb{R}^{n})}\leq \B(\f{r_{1}}{p}\B)^{\f{1}{r_{1}}-\f{1}{r_{2}}}\|f\|_{L^{p,r_{1}}(\mathbb{R}^{n})}.
\ee

\item  Sobolev's  inequality in Lorentz spaces \cite{[Neil],[Tartar]}
\be\label{sl}
\|f\|_{L^{\f{np}{n-p},p}(\mathbb{R}^{n})}\leq \hs{C} \|\nabla f\|_{L^{p}(\mathbb{R}^{n})}~~\text{with}~~ 1\leq p<n.\ee
\end{itemize}

\subsection{Some  \hs{materials} on anisotropic Lebesgue \hs{spaces}}

\hs{Let $x=(x_1,\,x_2,\,x_3)\in\mathbb{R}^{3}, \overrightarrow{q}=(q_1,\,q_2,\,q_3)$, where $q_i \in [1,\,\infty], 1\leq i \leq 3$.} A function $\hs{f(x)}$ belongs to  the anisotropic Lebesgue \hs{space $L^{\overrightarrow{q}}(\mathbb{R}^{3})$}
if
  $$\hs{\|f\|_{L^{\overrightarrow{q}}(\mathbb{R}^{3})}=\|f\|_{L_{1}^{q_{1}}L_{2}^{q_{2}}
L_{3}^{q_{3}}(\mathbb{R}^{3})}=
  \B\|\big\|\|f\|_{L_{x_1}^{q_{1}}(\mathbb{R})}\big\|_{L_{x_2}^{q_{2}}(\mathbb{R})}\B\|_{L_{x_3}^{q_{3}} (\mathbb{R})}}<\infty.  $$
The study of
 anisotropic Lebesgue spaces  first appears in
  Benedek and   Panzone \cite{[BP]}.

We list the properties of \hs{anisotropic Lebesgue spaces} as follows.
\begin{itemize}
\item The H\"older inequality in anisotropic Lebesgue spaces \cite{[BP]}
 \be\label{HIAL}
 \|fg\|_{L^{1}(\hs{\mathbb{R}^{3}})}\leq \|f \|_{L^{\overrightarrow{r}}(\hs{\mathbb{R}^{3}})} \| g\|_{L^{\overrightarrow{s}}(\hs{\mathbb{R}^{3}})}\hs{~~\text{with}~~} \hs{\f{1}{\overrightarrow{r}}+\f{1}{\overrightarrow{s}}=1.}
 \ee

\item The interpolation  inequality in anisotropic Lebesgue spaces \cite{[BP]}
        \be\label{interi}
\|f\|_{L^{\overrightarrow{r}}(\mathbb{R}^{3}) }
\leq \|f\|^{\alpha}_{L^{\overrightarrow{s}}(\mathbb{R}^{3}) }\|f\|^{1-\alpha}_{L^{\overrightarrow{t}}(\mathbb{R}^{3}) }~~\text{with}~~\f{1}{\hs{\overrightarrow{r}}}=\f{\alpha}{\hs{\overrightarrow{s}}}+\f{1-\alpha}{\hs{\overrightarrow{t}}},~0<\alpha<1.
\ee
\item
Boundedness of  singular integral operator in anisotropic Lebesgue spaces \cite{[RRT]}
\be\|R_{j}f\|_{L^{\overrightarrow{q}}(\hs{\mathbb{R}^{3}})}\leq C\| f\|_{L^{\overrightarrow{q}}(\hs{\mathbb{R}^{3}})},~1<\hs{\overrightarrow{q}}<\infty.
\label{bsa}\ee

\item  Sobolev's  inequality in anisotropic Lebesgue spaces  \cite{[Zheng],[GCS]} \\
 \hs{For} $q_{1},q_{2},q_{3}\in[2,\infty)$ and \wred{$0\leq \hs{\sum_{i} \f{1}{q_i}}-\f12\leq 1$}\hs{,}
\begin{align}\label{zc}\|f\|_{\wred{L^{\overrightarrow{q}}(\mathbb{R}^3)}}
 &\leq C
\|\partial_{1}f\|^{\f{q_{1}-2}{2q_{1}}}_{L^{2}(\mathbb{R}^{3})}\|\partial_{2}f\|^{\f{q_{2}-2}{2q_{2}}}_{L^{2}(\mathbb{R}^{3})}
\|\partial_{3}f\|^{\f{q_{3}-2}{2q_{3}}}_{L^{2}(\mathbb{R}^{3})}\|f\|^{\hs{\sum_{i} \f{1}{q_i}}-\f12}_{L^{2}(\mathbb{R}^{3})}
\nonumber\\&\leq C\|\nabla f\|_{L^{2}(\mathbb{R}^{3})}^{\f32-{\hs{\sum_{i} \f{1}{q_i}}}}\|f\|^{\hs{\sum_{i} \f{1}{q_i}}-\f12}_{L^{2}(\mathbb{R}^{3})}.
\end{align}
\end{itemize}

\subsection{Auxiliary lemmas  }
The following powerful
Gronwall lemma obtained by  Bosia,   Pata and   Robinson in \cite{[BPR]} \wg{helps} us to prove Theorem \ref{the1.1}.
  \begin{lemma}[\cite{[BPR]}]\label{2.1}
 Let $\phi$ be a measurable positive function
  defined on the interval $[0,T]$.
  Suppose that there  exists
  $\tau_{0}>0$ such that for all $0<\tau<\tau_{0}$ and a.e. $t\in[0,T]$, $\phi$ satisfies the inequality
 $$\f{d}{dt}\phi\leq \mu\lambda^{1-\tau}\phi^{1+2\tau},$$
 where   $0 <\lambda \in L^{1,\infty}(0,T)$ and $\mu> 0$  with
 $$\mu\|\lambda\|_{L^{1,\infty}(0,T)}<\f12.$$
 Then $\phi$ is bounded on $[0,T]$.
 \end{lemma}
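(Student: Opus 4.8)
The plan is to run a continuity/bootstrap argument on the set where $\phi$ stays bounded, using the weak-$L^1$ norm of $\lambda$ to control the growth uniformly in the auxiliary parameter $\tau$, and then letting $\tau\to 0^+$. First I would fix $\tau\in(0,\tau_0)$ and integrate the differential inequality. The point of raising $\lambda$ to the power $1-\tau$ rather than $1$ is that $\lambda^{1-\tau}$ lies in $L^{1/(1-\tau),\infty}\subset L^1_{\rm loc}$ with a quantitative bound: by the definition of the Lorentz quasinorm one has, on any measurable set of finite measure, an estimate of the shape $\int \lambda^{1-\tau}\,dt \le C(\tau)\,|{\rm supp}|^{\tau}\,\|\lambda\|_{L^{1,\infty}}^{1-\tau}$, where the constant $C(\tau)$ stays bounded (indeed $\to 1$ in the relevant normalization) as $\tau\to 0$. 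This is the mechanism that upgrades $\lambda\in L^{1,\infty}$—which by itself need not be integrable—to something usable.

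Next I would set up the bootstrap. Let $M=\phi(0)$ (WLOG finite; otherwise there is nothing to prove) and suppose toward a contradiction that $\phi$ is unbounded on $[0,T]$. Define $T_\star=\sup\{s\le T: \sup_{[0,s]}\phi \le 2M\}$, say, or more flexibly work with the first time $\phi$ reaches some threshold $K$ to be chosen. On $[0,T_\star]$ we may substitute the a priori bound $\phi\le K$ into the right-hand side of $\phi'\le \mu\lambda^{1-\tau}\phi^{1+2\tau}$ to get $\phi'\le \mu K^{2\tau}\lambda^{1-\tau}\phi$, whence by Gronwall
\[
\phi(t)\le \phi(0)\exp\!\Big(\mu K^{2\tau}\!\int_0^{t}\lambda^{1-\tau}\,ds\Big)\le M\exp\!\Big(\mu K^{2\tau} C(\tau)\,T^{\tau}\,\|\lambda\|_{L^{1,\infty}(0,T)}^{1-\tau}\Big)
\]
for all $t\le T_\star$. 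Now I would take $\tau\to 0^+$: the factors $K^{2\tau}$, $C(\tau)$, $T^\tau$ all tend to $1$, and $\|\lambda\|_{L^{1,\infty}}^{1-\tau}\to\|\lambda\|_{L^{1,\infty}}$, so the exponent converges to $\mu\|\lambda\|_{L^{1,\infty}(0,T)}$, which by hypothesis is $<\tfrac12$. Hence for $\tau$ small enough the bound reads $\phi(t)\le M e^{1/2}<2M$ on $[0,T_\star]$, and in particular $\phi$ cannot have reached $2M$ there—so $T_\star=T$ and $\phi$ is bounded on all of $[0,T]$ by $Me^{1/2}$.

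The main obstacle is the first step: making precise and quantitative the claim that the $L^{1,\infty}$ datum produces, after the $1-\tau$ power, an integral bound whose constant is controlled uniformly as $\tau\to0$. One clean way is to use the layer-cake/distribution-function representation: writing $\lambda_*$ for the decreasing rearrangement, $\|\lambda\|_{L^{1,\infty}}=\sup_{s>0} s\,\lambda_*(s)$, so $\lambda_*(s)\le \|\lambda\|_{L^{1,\infty}}/s$, and then $\int_0^{T}\lambda^{1-\tau}=\int_0^{T}\lambda_*(s)^{1-\tau}\,ds\le \|\lambda\|_{L^{1,\infty}}^{1-\tau}\int_0^{T}s^{-(1-\tau)}\,ds=\|\lambda\|_{L^{1,\infty}}^{1-\tau}\,\tfrac{T^{\tau}}{\tau}$. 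The apparent blow-up of the $1/\tau$ factor as $\tau\to0$ is exactly absorbed because it is multiplied against $K^{2\tau}-$type factors only through the exponent, and more to the point the cleaner route is to not integrate over all of $(0,T)$ but over the sublevel set and pair with the power of $\phi$; alternatively one replaces the crude bound on $\int \lambda^{1-\tau}$ by splitting at level $\delta$ and optimizing, which removes the $1/\tau$. I would present the splitting argument carefully, since getting a constant that is genuinely $o(1/\tau)$—ideally bounded—is the crux that makes the limit $\tau\to0$ legitimate; everything else is a routine Gronwall-plus-continuity argument.
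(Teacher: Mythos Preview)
The paper does not prove this lemma; it is quoted from \cite{[BPR]} without argument. So there is no ``paper's proof'' to match, but your proposal has a real gap that is worth naming.

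Your crux is the bound on $\int_0^T \lambda^{1-\tau}\,ds$. Using the rearrangement you correctly find
\[
\int_0^T \lambda^{1-\tau}\,ds \;\le\; \|\lambda\|_{L^{1,\infty}}^{1-\tau}\,\frac{T^{\tau}}{\tau},
\]
and you then hope that splitting at a level $\delta$ will remove the $1/\tau$. It will not: the factor $1/\tau$ is sharp (take $\lambda(s)=1/s$ on $(0,T)$, for which $\|\lambda\|_{L^{1,\infty}}=1$ and $\int_0^T s^{-(1-\tau)}\,ds = T^{\tau}/\tau$ exactly). Consequently, in your Gr\"onwall step the exponent is $\mu K^{2\tau}\,T^{\tau}\,\|\lambda\|_{L^{1,\infty}}^{1-\tau}/\tau$, which blows up as $\tau\to 0^+$ rather than tending to $\mu\|\lambda\|_{L^{1,\infty}}$. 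The bootstrap therefore never closes.

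The mechanism you are missing is that the nonlinearity itself supplies the compensating factor of $\tau$. Do not linearize to $\phi'\le \mu K^{2\tau}\lambda^{1-\tau}\phi$; instead divide $\phi'\le \mu\lambda^{1-\tau}\phi^{1+2\tau}$ by $\phi^{1+2\tau}$ and integrate the resulting Bernoulli inequality:
\[
-\frac{d}{dt}\,\phi^{-2\tau} \;\le\; 2\tau\,\mu\,\lambda^{1-\tau},
\qquad\text{hence}\qquad
\phi(t)^{-2\tau} \;\ge\; \phi(0)^{-2\tau} - 2\tau\,\mu\!\int_0^t \lambda^{1-\tau}\,ds.
\]
Now the $2\tau$ on the right cancels the $1/\tau$ in the integral bound, giving
\[
\phi(t)^{-2\tau} \;\ge\; \phi(0)^{-2\tau} - 2\mu\,T^{\tau}\,\|\lambda\|_{L^{1,\infty}}^{1-\tau}.
\]
Letting $\tau\to 0^+$ the right-hand side tends to $1-2\mu\|\lambda\|_{L^{1,\infty}}>0$, so for some fixed small $\tau$ one has $\phi(t)^{-2\tau}\ge c>0$ on $[0,T]$, i.e.\ $\phi(t)\le c^{-1/(2\tau)}$. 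This is the argument of \cite{[BPR]}; no continuity/bootstrap step is needed.
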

To apply the above lemma, we require the following fact.
\begin{lemma}[\cite{[JWW]}]\label{lemma2.2}
Assume that the pair $(p,q)$ satisfies $\f{2}{p}+\f{3}{q}=a$  with
$a,q \geq1 $ and $p>0$. Then, for every $\tau\in[0,1]$ and given $b,c_0\geq1$, there  exist $p_{\tau} > 0$ and $\min\{q,b\}\leq q_{\tau}\leq\max\{q,b\}$ such that
\be\left\{\ba\label{ro}
&\f{2}{p_{\tau}}+\f{3}{q_{\tau}}=a, \\
&\f{p_{\tau}}{q_{\tau}}=\f{p\big(1-\tau\big)}{q}+\f{c_0\tau}{b}.
\ea\right.\ee
\end{lemma}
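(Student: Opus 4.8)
\textbf{Proof proposal for Lemma \ref{lemma2.2}.}
The plan is to treat the two equations in \eqref{ro} as a linear system in the two unknowns $1/p_\tau$ and $1/q_\tau$, solve it explicitly, and then verify that the resulting values satisfy the required constraints $p_\tau>0$ and $\min\{q,b\}\le q_\tau\le\max\{q,b\}$. First I would rewrite the second equation of \eqref{ro} by clearing denominators: since $p_\tau/q_\tau=(p_\tau)(1/q_\tau)$, and using the first equation to express $p_\tau$, it is cleaner to introduce $X=1/p_\tau$ and $Y=1/q_\tau$ and rewrite the system as $2X+3Y=a$ together with $Y/X = p(1-\tau)/q + c_0\tau/b =: \beta_\tau$. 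Substituting $Y=\beta_\tau X$ into the first relation gives $X(2+3\beta_\tau)=a$, hence $X=a/(2+3\beta_\tau)>0$ (note $\beta_\tau\ge0$ since all quantities are nonnegative, and $a\ge1>0$), which immediately yields $p_\tau=(2+3\beta_\tau)/a>0$, and then $Y=\beta_\tau X>0$ so $q_\tau=1/Y$ is well-defined and positive. This handles the existence and the positivity of $p_\tau$ with essentially no work once the algebra is set up.

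The substantive point is the two-sided bound $\min\{q,b\}\le q_\tau\le\max\{q,b\}$. I would observe that $\tau\mapsto\beta_\tau$ is affine on $[0,1]$, with $\beta_0 = p/q$ and $\beta_1 = c_0/b$. Correspondingly $\tau\mapsto Y=Y(\tau)$ is a Möbius-type function of $\tau$; rather than analyze it directly, I would verify the bound at the two endpoints and then use a monotonicity/intermediate-value argument. At $\tau=0$: $\beta_0=p/q$, so $2+3\beta_0 = 2+3p/q = (2q+3p)/q$; but the hypothesis $2/p+3/q=a$ gives $2q+3p=apq$, hence $2+3\beta_0 = apq/q\cdot(1/?)$—more precisely $X=a/(2+3p/q)=aq/(2q+3p)=aq/(apq)=1/p$, so $q_\tau|_{\tau=0}=1/Y=1/(\beta_0 X)=q$, which sits trivially in $[\min\{q,b\},\max\{q,b\}]$. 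At $\tau=1$: $\beta_1=c_0/b$, and one computes $q_\tau|_{\tau=1}=1/(\beta_1 X)=(2+3c_0/b)/(a\,c_0/b)=(2b+3c_0)/(ac_0)$. For this endpoint I would need $\min\{q,b\}\le (2b+3c_0)/(ac_0)\le\max\{q,b\}$; this is where I expect the interplay between $a$, $q$, $b$, $c_0$ to matter, and I suspect the intended reading is that one only claims $q_\tau$ lies between $q$ and $b$ \emph{for each fixed $\tau$} via continuity from the $\tau=0$ value, with the precise endpoint identity perhaps needing $c_0\ge1$ and $a\ge1$ to keep things in range.

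To make the two-sided bound rigorous for all $\tau\in[0,1]$ simultaneously, I would argue as follows: $q_\tau = 1/Y(\tau)$ where $Y(\tau)=\beta_\tau\,a/(2+3\beta_\tau) = a\big(1/3\big)\big(1 - 2/(2+3\beta_\tau)\big)$, which is a strictly increasing function of $\beta_\tau$; and $\beta_\tau$ is affine (hence monotone) in $\tau$, running from $p/q$ to $c_0/b$. Therefore $Y(\tau)$ is monotone in $\tau$ and takes values between $Y(0)=1/q$ and $Y(1)=b/(2b+3c_0)\cdot a/?$—in any case between its two endpoint values—so $q_\tau=1/Y(\tau)$ is monotone in $\tau$ and lies between $q_0=q$ and $q_1=(2b+3c_0)/(ac_0)$. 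The remaining step is to check $q_1\in[\min\{q,b\},\max\{q,b\}]$; granting that (which reduces to an elementary inequality in $a,b,c_0$ using $a\ge1$, $c_0\ge1$, together with $2/p+3/q=a$), the full interval statement follows by monotonicity and the intermediate value theorem. The main obstacle I anticipate is precisely pinning down this last elementary inequality and confirming it follows from the stated hypotheses ($a,q\ge1$, $b,c_0\ge1$) rather than requiring additional constraints; everything else is routine linear algebra and a monotonicity observation.
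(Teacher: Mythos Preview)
The paper does not actually prove this lemma; it is quoted from \cite{[JWW]}. Your approach---solve the two equations for $1/p_\tau$ and $1/q_\tau$ and then exploit monotonicity in $\tau$---is the right one, and everything up to the last step is correct: with $\beta_\tau=p(1-\tau)/q+c_0\tau/b$ one finds $p_\tau=(2+3\beta_\tau)/a>0$ and $q_\tau=2/(a\beta_\tau)+3/a$, so $q_\tau$ is monotone in $\beta_\tau$ (hence in $\tau$) and $q_0=q$.

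The real gap is precisely the step you single out as ``the remaining step'': the inclusion $q_1=(2b+3c_0)/(ac_0)\in[\min\{q,b\},\max\{q,b\}]$ does \emph{not} follow from the stated hypotheses $a,q,b,c_0\ge1$. For example, with $a=1$, $q=4$ (so $p=8$), $b=6$, $c_0=1$ one gets $q_1=15\notin[4,6]$, so for $\tau$ near $1$ the bound on $q_\tau$ fails. What is missing is the extra relation $\tfrac{2}{c_0}+\tfrac{3}{b}=a$, which forces $q_1=b$ exactly; once that holds, your monotonicity argument immediately gives $\min\{q,b\}\le q_\tau\le\max\{q,b\}$ for every $\tau\in[0,1]$. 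This hidden constraint is satisfied in both places the paper invokes the lemma---$(a,b,c_0)=(1,6,4)$ and $(a,b,c_0)=(2,6,\tfrac{4}{3})$---so the applications are sound even though the lemma, as literally stated, is not. In short, the obstacle you anticipated is not a technicality you can push through with the given hypotheses; add $\tfrac{2}{c_0}+\tfrac{3}{b}=a$ (equivalently, require that $(c_0,b)$ satisfy the same scaling relation as $(p,q)$), and your proof is complete.
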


 Finally, we recall
the local well-posedness
  of   strong solutions to the full
compressible Navier-Stokes equations \eqref{FNS}
and nonhomogeneous incompressible Navier-Stokes equations
\eqref{NNS}.
 \begin{prop}[\cite{[CK]}]\label{localwith vacuum}
Suppose $ u_0, \theta_0 \in D^1(\mathbb{R}^{3})\cap D^2(\mathbb{R}^{3})$ and
\[\rho_0 \in W^{1,q}(\mathbb{R}^{3}) \cap H^1(\mathbb{R}^{3})\cap L^1(\mathbb{R}^{3})\]
for some $q\in (3,6]$.
If $\rho_0$ is nonnegative and the initial data \hs{to \eqref{FNS}} satisfy the compatibility condition
$$\ba
&-\mu\Delta  u_0-(\mu+\lambda)\nabla\emph{div}\,u_0 +\nabla \hs{P}(\rho_0,\theta_0) = \sqrt{\rho_0} g_1\\
&\Delta\theta_0+\frac{\mu}{2}|\nabla  u_0 +(\nabla u_0)^{\text {tr}}|^2
+ \lambda(\hs{\emph{div}\,u_0})^2 =\sqrt{\rho_0} g_2
\ea$$
for    vector fields  $g_1,g_2\in L^2(\mathbb{R}^{3})$.
Then there \hs{exist} $T\in (0,\infty]$ and \hs{a} unique solution \hs{to \eqref{FNS}},  satisfying
$$\ba
&(\rho, u,\theta)\in C([0,T );L^1\cap H^1(\mathbb{R}^{3})\cap W^{1,q}(\mathbb{R}^{3})) \times C([0,T);D^1\cap D^2 )\times L^2([0,T);D^{2,q}) \\
&(\rho_t, u_t,\theta_t)\in C([0,T );L^2\cap L^q(\mathbb{R}^{3}))\times L^2([0,T); D^1)\times L^2([0,T);D^1) \\
&(\rho^{1/2} u_t,\rho^{1/2}\theta_t)\in L^\infty([0,T);L^2(\mathbb{R}^{3})) \times L^\infty([0,T);L^2).
\ea$$
\end{prop}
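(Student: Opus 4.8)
The plan is to establish this local well-posedness statement via the standard linearization–iteration scheme for compressible flows with possibly vanishing density, following Cho, Choe and Kim. \textbf{Regularization.} Since $\rho_0$ may vanish, I would first replace it by $\rho_0^\delta=\rho_0+\delta$, $\delta\in(0,1)$, so that the momentum balance and the internal-energy balance become uniformly (non-degenerately) parabolic; one solves the $\delta$-problem, derives bounds independent of $\delta$, and lets $\delta\to0$ at the end. \textbf{Iteration.} Define approximations $(\rho^k,u^k,\theta^k)$ recursively: take $u^0,\theta^0$ to be time-independent extensions of the data; given $(u^{k-1},\theta^{k-1})$, solve the linear transport equation $\rho^k_t+\nabla\cdot(\rho^k u^{k-1})=0$, $\rho^k|_{t=0}=\rho_0^\delta$, by characteristics (which preserves a positive lower bound and the $H^1\cap W^{1,q}$ regularity on a short interval), then solve the linear parabolic system for $u^k$ with coefficient $\rho^k$ and drift $u^{k-1}$, and finally the linear parabolic equation for $\theta^k$ carrying the now-known quadratic dissipation $\tfrac{\mu}{2}|\nabla u^k+(\nabla u^k)^{\mathrm{tr}}|^2+\lambda(\mathrm{div}\,u^k)^2$ on the right-hand side. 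Classical linear parabolic and transport theory place each iterate in the class claimed in the Proposition.

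\textbf{Uniform estimates.} The core is to produce bounds uniform in both $k$ and $\delta$ on an interval $[0,T_1]$ depending only on the data. Testing the momentum equation by $u^k$, by $u^k_t$ and by $\dot u^k=u^k_t+u^{k-1}\cdot\nabla u^k$, and the temperature equation by $\theta^k$ and $\theta^k_t$, one closes a Gronwall inequality for
$$\Phi^k(t)=\|\rho^k\|_{H^1\cap W^{1,q}}^2+\|\nabla u^k\|_{L^2}^2+\|\nabla\theta^k\|_{L^2}^2+\|\sqrt{\rho^k}\,u^k_t\|_{L^2}^2+\|\sqrt{\rho^k}\,\theta^k_t\|_{L^2}^2$$
together with the parabolic gains $\int_0^t\big(\|\nabla^2u^k\|_{L^2}^2+\|\nabla^2\theta^k\|_{L^2}^2+\|\nabla u^k_t\|_{L^2}^2+\|\nabla\theta^k_t\|_{L^2}^2\big)\,ds$; here the $\nabla^2u^k$ (and $W^{2,q}$) bounds are recovered from $\rho^k\dot u^k$ and $\nabla P^k$ via the Lam\'e-system elliptic estimate \eqref{ntle} and its $L^q$ analogue, and $\nabla^2\theta^k\in L^q$ follows by bootstrapping the temperature equation. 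The delicate point is the initial layer: because $\rho_0$ may vanish, $\|\sqrt{\rho^k}u^k_t(0)\|_{L^2}$ and $\|\sqrt{\rho^k}\theta^k_t(0)\|_{L^2}$ are not directly controlled by the data, but the two compatibility conditions $-\mu\Delta u_0-(\mu+\lambda)\nabla\mathrm{div}\,u_0+\nabla P(\rho_0,\theta_0)=\sqrt{\rho_0}\,g_1$ and $\Delta\theta_0+\tfrac{\mu}{2}|\nabla u_0+(\nabla u_0)^{\mathrm{tr}}|^2+\lambda(\mathrm{div}\,u_0)^2=\sqrt{\rho_0}\,g_2$ identify $\rho^k u^k_t(0)$ and (a multiple of) $\rho^k\theta^k_t(0)$ with $\sqrt{\rho_0}$ times quantities bounded by $\|g_1\|_{L^2}$, $\|g_2\|_{L^2}$, which is exactly what makes $\Phi^k(0)$ finite and the Gronwall loop close. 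This yields $\sup_{[0,T_1]}\Phi^k\le C$ uniformly in $k,\delta$.

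\textbf{Convergence and uniqueness.} One then estimates the differences $\rho^{k+1}-\rho^k$, $u^{k+1}-u^k$, $\theta^{k+1}-\theta^k$ in a weaker norm — $L^\infty_tL^2$ for the density and $L^\infty_tL^2\cap L^2_tD^1$ for velocity and temperature. Using the uniform bounds to absorb the top-order contributions and shrinking the time interval to $[0,T_2]$ if necessary, one obtains a contraction, so $(\rho^k,u^k,\theta^k)$ converges; the uniform higher-order bounds together with weak lower semicontinuity place the limit in the stated class, and it solves the $\delta$-regularized system. Since all estimates are independent of $\delta$, passing $\delta\to0$ yields a solution of \eqref{FNS} with the asserted regularity, and the strong (as opposed to weak) time-continuity is recovered from the equations together with the regularity of $(\rho_t,u_t,\theta_t)$ in the usual way. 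Uniqueness follows by applying the same difference estimate to two solutions with identical data, which gives a homogeneous Gronwall inequality and hence coincidence. I expect the main obstacle to be the uniform-in-$\delta$ control of $\|\sqrt{\rho}\,u_t\|_{L^\infty_tL^2}$ and $\|\sqrt{\rho}\,\theta_t\|_{L^\infty_tL^2}$ at and near $t=0$ through the compatibility conditions, together with the careful bookkeeping needed to absorb the quadratic dissipation term in the $\theta$-equation when closing the energy estimates.
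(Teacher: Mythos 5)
The paper does not prove this proposition at all: it is recalled verbatim as a known local well-posedness result of Cho--Kim (the citation should point to \cite{[CKC]} rather than \cite{[CK]}), so there is no in-paper argument to compare against. Your sketch follows essentially the same linearization--iteration strategy as that original reference --- density regularization to handle vacuum, uniform estimates closed through the two compatibility conditions controlling $\sqrt{\rho}u_t$ and $\sqrt{\rho}\theta_t$ at $t=0$, contraction in weaker norms, and passage to the limit --- and is consistent with how the cited result is actually established.
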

\begin{prop}[\cite{[CCK]}]\label{localwith vacuum nonns}
Suppose $ u_0 \in D^1(\mathbb{R}^{3})\cap D^2(\mathbb{R}^{3})$ and
\[\rho_0 \in W^{1,q}(\mathbb{R}^{3}) \cap H^1(\mathbb{R}^{3})\cap L^1(\mathbb{R}^{3})\]
for some $q\in (3,6]$.
If $\rho_0$ is nonnegative and the initial data \hs{to \eqref{NNS}} satisfy the compatibility condition
$$
 -\Delta  u_0 +\nabla \Pi_{0}  = \sqrt{\rho_0} g
$$
for    vector \hs{field}  $g  \in L^2(\mathbb{R}^{3})$.
Then there \hs{exist} $T\in (0,\infty]$ and \hs{a} unique solution \hs{to \eqref{NNS}},  satisfying
$$\ba
&(\rho, u )\in C([0,T );L^1(\mathbb{R}^{3})\cap H^1(\mathbb{R}^{3})\cap W^{1,q}(\mathbb{R}^{3})) \times C([0,T);D^1\cap D^2 ) \\
&(\rho_t, u_t )\in C([0,T );L^2(\mathbb{R}^{3})\cap L^q(\mathbb{R}^{3}))\times L^2([0,T); D^1)  \\
& \rho^{1/2} u_t \in L^\infty([0,T);L^2(\mathbb{R}^{3})).
\ea$$
\end{prop}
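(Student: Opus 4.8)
The statement is a local well-posedness result, so the natural plan is to follow the linearization-and-iteration scheme of Choe--Kim, adapted to allow vacuum by exploiting the compatibility condition. Set $(\rho^0,u^0)=(\rho_0,\bar u)$ with $\bar u$ the solution of $\partial_t\bar u-\Delta\bar u=0$, $\bar u|_{t=0}=u_0$. Given $(\rho^k,u^k)$, define $\rho^{k+1}$ as the solution of the linear transport equation $\partial_t\rho^{k+1}+u^k\cdot\nabla\rho^{k+1}=0$, $\rho^{k+1}|_{t=0}=\rho_0$, and $(u^{k+1},\Pi^{k+1})$ as the solution of the linear nonstationary Stokes problem
\[
\rho^k\partial_t u^{k+1}+\rho^k u^k\cdot\nabla u^{k+1}-\Delta u^{k+1}+\nabla\Pi^{k+1}=0,\qquad \Div u^{k+1}=0,\qquad u^{k+1}|_{t=0}=u_0 .
\]
When $\rho_0$ may vanish, first replace it by $\rho_0^\delta=\rho_0+\delta$, $\delta\in(0,1)$, so that the Stokes problem is uniformly parabolic; all estimates below are to be made uniform in $\delta$, and $\delta\to0$ is taken at the very end.

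\textbf{A priori bounds on a short interval.} For the transport step, $\Div u^k=0$ preserves $\|\rho^{k+1}(t)\|_{L^1\cap L^\infty}$, while $\|\nabla\rho^{k+1}(t)\|_{L^2\cap L^q}\le\|\nabla\rho_0\|_{L^2\cap L^q}\exp\!\big(C\!\int_0^t\|\nabla u^k\|_{L^\infty}\big)$, and similarly for $\partial_t\rho^{k+1}$. For the Stokes step, testing with $u^{k+1}$ controls $\sup_t\|\sqrt{\rho^k}u^{k+1}\|_{L^2}^2+\int_0^t\|\nabla u^{k+1}\|_{L^2}^2$; testing with $\partial_t u^{k+1}$ and using the stationary Stokes estimates $\|\nabla^2u^{k+1}\|_{L^r}+\|\nabla\Pi^{k+1}\|_{L^r}\le C\big(\|\rho^k\partial_t u^{k+1}\|_{L^r}+\|\rho^k u^k\cdot\nabla u^{k+1}\|_{L^r}\big)$ for $r=2$ and $r=q$, together with the Sobolev and Gagliardo--Nirenberg inequalities, bounds (for short time)
\[
\sup_{0\le t\le T}\Big(\|\nabla u^{k+1}\|_{L^2}^2+\|\sqrt{\rho^k}\partial_t u^{k+1}\|_{L^2}^2\Big)+\int_0^T\Big(\|\nabla^2u^{k+1}\|_{L^2\cap L^q}^2+\|\nabla\Pi^{k+1}\|_{L^2\cap L^q}^2+\|\nabla\partial_t u^{k+1}\|_{L^2}^2\Big).
\]
The decisive point is the initial value: from the equation at $t=0$, $\sqrt{\rho^k}\partial_t u^{k+1}|_{t=0}=\sqrt{\rho_0^\delta}\big(-u_0\cdot\nabla u_0+(\rho_0^\delta)^{-1}(\Delta u_0-\nabla\Pi_0^\delta)\big)$, and the compatibility condition $-\Delta u_0+\nabla\Pi_0=\sqrt{\rho_0}\,g$ is exactly what makes $\|\sqrt{\rho^k}\partial_t u^{k+1}(0)\|_{L^2}\le C(\|g\|_{L^2}+\|u_0\cdot\nabla u_0\|_{L^2})$ finite, independently of $k$ and $\delta$. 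A standard induction then shows all these bounds hold with a $k$- and $\delta$-independent constant on a $k$- and $\delta$-independent interval $[0,T]$.

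\textbf{Convergence and uniqueness.} Writing $\bar\rho^{k+1}=\rho^{k+1}-\rho^k$, $\bar u^{k+1}=u^{k+1}-u^k$, the difference $\bar\rho^{k+1}$ solves a transport equation with source $-\bar u^k\cdot\nabla\rho^k$ and $\bar u^{k+1}$ solves a Stokes problem whose right-hand side is linear in $\bar\rho^k,\bar u^k$; a low-order energy estimate plus Gronwall (the uniform bounds of the previous step controlling all coefficients) shows $(\rho^k,u^k)$ is Cauchy in $C([0,T];L^2)\times\big(C([0,T];L^2)\cap L^2(0,T;D^1)\big)$. The limit $(\rho,u)$ solves \eqref{NNS}; the uniform higher-order bounds, weak-$*$ compactness and lower semicontinuity of norms, and interpolation upgrade it to the regularity class stated in the proposition, after which $\delta\to0$ is passed. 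Uniqueness: for two solutions with the same data, since $q>3$ gives $\nabla u_i\in L^2(0,T;L^\infty)$, the transport difference is controlled by $\|\bar u\|$ in $L^\infty_tL^2$ (or in a negative-order norm), and an energy estimate for $\bar u$, using that $\|\sqrt{\rho_i}\partial_t u_i\|_{L^2}$ and $\|\nabla^2 u_i\|_{L^2}$ are integrable in time, closes via Gronwall and forces $\bar\rho\equiv0$, $\bar u\equiv0$.

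\textbf{Main obstacle.} The delicate step is the second one: obtaining the second-order estimates---above all $\sup_t\|\sqrt{\rho}\,\partial_t u\|_{L^2}$ and $\int_0^T\|\nabla^2 u\|_{L^q}^2$---uniformly in the iteration index \emph{and} in the vacuum-regularization parameter $\delta$, because the coefficient $\rho$ of the time derivative degenerates wherever the density vanishes. This is precisely where the compatibility condition is indispensable (to bound $\|\sqrt{\rho}\,\partial_t u(0)\|_{L^2}$), and where the nonlinear convection term $\rho u\cdot\nabla u$ must be absorbed through Sobolev/Gagliardo--Nirenberg interpolation together with a continuity argument, so that the existence time $T$ stays independent of $k$ and $\delta$.
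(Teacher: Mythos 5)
This proposition is not proved in the paper at all: it is quoted, with citation, as a known result of Choe--Kim on nonhomogeneous incompressible fluids, so there is no in-paper argument to compare against. Your outline reproduces essentially the scheme of that cited reference --- linearized transport/Stokes iteration, regularization $\rho_0+\delta$ of the vacuum, uniform second-order estimates anchored by the compatibility condition, contraction in a low-order norm, and Gronwall-based uniqueness --- so it is the same approach and sound as a proof sketch.
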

\section{ Full compressible
Navier-Stokes equations }
\label{sec3}
\setcounter{section}{3}\setcounter{equation}{0}
\subsection{Continuation criteria in Lorentz spaces }

This section is devoted to proving  Theorem \ref{the1.1}.
\begin{proof}[Proof of \eqref{enumerate1} in Theorem \ref{the1.1}]
Multiplying $\eqref{FNS}_{2}$ by $u_{t}$, integrating over $\mathbb{R}^{3}$, we can write
\be\ba\label{3.10}
&\f{1}{2}\f{d}{dt}\int\B[\mu|\nabla u|^{2}+(\lambda+\mu)(\text{div\,}u)^{2}\B]dx+\int \rho |\dot{u}|^{2}dx\\=&\int \rho \dot{u}\cdot(u\cdot\nabla u)dx+\int P\text{div\,}u_{t}dx\\=&I+II.
\ea
 \ee
In view of the Young inequality, we see that
 \be\ba\label{3.11}
I\leq\f{1}{4}\int \rho |\dot{u}|^{2}dx+C\int |u|^{2}|\nabla u|^{2}  dx.
\ea
 \ee
 By means of   the effective viscous flux \eqref{evf}, we calculate
 \be\ba\label{3.12}
II&=\f{d}{dt}\int P\text{div\,}udx-\int P_{t}\text{div\,}udx\\&=\frac{d}{dt}\int P\text{div\,}u-\frac{1}{2(2\mu+\lambda)}
\frac{d}{dt}\int P^2
-\frac{1}{2\mu+\lambda}\int P_tG
\\&=II_{1}+II_{2}+II_{3}.
\ea
 \ee
We define total energy
$$E=:\theta+\f{|u|^{2}}{2}.$$
It is clear that, for $\alpha\geq1$,
\be\label{citaE}
0\leq\theta^{\alpha}\leq E^{\alpha}.
\ee
From \eqref{FNS}, we know that
\be\label{eqofE}
(\rho E)_{t}-\f{\kappa}{c_{v}}\text{div}(\nabla E) =\text{div}F,
\ee
where
$$
F=-\rho Eu+\f{\mu-\kappa c_{v}}{2}\nabla(|u|^{2})+\mu u\cdot\nabla u+\lambda u\text{div} u-Pu.
$$
Performing some direct calculations yields that
  \be\ba\label{db}
|F|&\leq C(|\rho Eu|+|u||\nabla u|+|Pu|)\\
 &\leq C(|\rho Eu|+|u||\nabla u|+R|\rho\theta u|)\\
&\leq C(|\rho Eu|+|u||\nabla u|).
 \ea \ee
 According to $P=R\rho\theta=R(\rho E-\rho\f{|u|^{2}}{2})$
and \eqref{eqofE},
we note that
\be  \ba\label{3.13}
II_{3}=&-\frac{R}{2\mu+\lambda}\int\ (\rho E)_t G+
\frac{1}{2\mu+\lambda}\int\ \left(\frac{\rho|u|^2}{2}\right)_t G\\=&-\frac{R}{2\mu+\lambda}\int\text{div}( F+ \f{\kappa}{c_{v}} \nabla E )  G+
\frac{R}{2\mu+\lambda}\int\ \left(\frac{\rho|u|^2}{2}\right)_t G\\=&\frac{R}{2\mu+\lambda}\int\   (F+ \f{\kappa}{c_{v}} \nabla E)  \nabla G+
\frac{R}{2\mu+\lambda}\int\ \left(\frac{\rho|u|^2}{2}\right)_t G \\=&II_{31}+II_{32} . \ea
\ee
In light of
\eqref{db}, the Young inequality and  \eqref{ntle}, we have
\be\ba\label{wwwy3.8}
II_{31}&\leq C \int\  ( |F| + |\nabla E|  ) | \nabla G|\\&\leq C \int\  ( |\rho Eu|+|u||\nabla u|+ |\nabla E|)  \nabla G \\
&\leq \eta\|\nabla G\|^{2}_{L^2}+C\int (\rho^{2} E^{2}|u|^{2}+|\nabla E|^{2}+|u|^{2}|\nabla u|^{2})  dx\\&\leq \eta C\|\rho \dot{u}\|^{2}_{L^2}+C\int (\rho^{2} E^{2}|u|^{2}+|\nabla E|^{2}+|u|^{2}|\nabla u|^{2})  dx\hs{.}
\ea\ee
It is straightforward to check
\be \ba\label{3.15}
II_{32}=&\frac{R}{2\mu+\lambda}\int\ \frac{\rho_t|u|^2}{2} G+
\frac{R}{2\mu+\lambda}\int\ \rho u\cdot u_t G\\
=&II_{321}+II_{322}.
\ea
\ee
Making use of $ \rho_t=-\text{div\,}(\rho u)$,
integrating by parts, the Young inequality, \eqref{citaE} and \eqref{ntle}, we see that
\be\ba\label{3.16}
II_{321}=&-\frac{R}{2\mu+\lambda}\int
\ \frac{\text{div\,}(\rho u)|u|^2}{2} G\\
=&\frac{R}{2\mu+\lambda}\int\ \rho u\cdot\nabla u\cdot u G
+\frac{R}{2\mu+\lambda}\int\ \frac{\rho u|u|^2}{2}\cdot\nabla G\\
\leq& C\int\ \rho |u\cdot\nabla u|^{2}+C\int\ \rho |u|^{2}| G|^{2}+C\int\ \rho |u|^6  +\eta\int| \nabla G|^{2}\\
\leq& C\int\ \rho |u\cdot\nabla u|^{2}+C\int\ \rho |u|^{2}(|\nabla u|^{2}+R\rho^{2}\theta^{2})+C\int  \rho |u|^{2} (\theta^{2}+E^2) +\eta C\int|\rho \dot{u}|^{2}\\
 \leq& C\int\rho   |u|^{2} |\nabla u|^{2}+C\int\ \rho |u|^{2} E^{2} +\eta C\int|\rho \dot{u}|^{2}\hs{,}
\ea\ee
where we have used the fact
\be\label{nxtlc}|G|\leq C(|\nabla u|+|\theta\rho|)\leq C(|\nabla u|+|E\rho|).\ee
\hs{Utilizing} $\dot{u}=u_t+u\cdot\nabla u$, the Young inequality, and \eqref{nxtlc}, we arrive at that
\be\ba\label{3.18}
II_{322}&=\frac{R}{2\mu+\lambda}\int\ \rho u\cdot (\dot{u}-u\cdot\nabla u) G\\&=\frac{R}{2\mu+\lambda}\int\ \rho u\cdot \dot{u}G-\int\ \rho u u\cdot\nabla u  G\\
&\leq \eta\int \rho |\dot{u}|^{2}dx+C\int \rho |  u |^{2}|G |^{2}dx +\int\ \rho |u|^2 |\nabla u|^2
\\
&\leq \eta\int \rho |\dot{u}|^{2}dx+C\int \rho   |u|^{2} |\nabla u|^{2}+C\int\ \rho |u|^{2} E^{2}.
 \ea\ee
Plugging  \eqref{3.16} and \eqref{3.18} into \eqref{3.15}, we deduce
 \be  \ba\label{3.19}
II_{32}\leq& \eta(1+C)\int \rho |\dot{u}|^{2}dx+C\int   |u|^{2} |\nabla u|^{2} +C\int\ \rho |u|^{2} E^{2}.
\ea\ee
It follows    from \eqref{3.13}, \eqref{wwwy3.8} and \eqref{3.19} that
 \be  \ba\label{3.20}
&II_{3}
\leq &C\eta\int \rho |\dot{u}|^{2}dx+C\int \rho  |u|^{2}|\theta|^{2}dx +C\int  |u|^{2}|\nabla u|^{2} +C \int|\nabla E|^{2}.
\ea\ee
Combining \eqref{3.10}, \eqref{3.11}, \eqref{3.12} and \eqref{3.20} and choosing $\eta$ sufficiently small lead  us to the estimate
\be\ba\label{k2}
& \hs{\f{1}{2} \f{d}{dt}}\int\B[\mu|\nabla u|^{2}+(\lambda+\mu)(\text{div\,}u)^{2}+ \frac{1}{ (2\mu+\lambda)} P^2-2P\text{div\,}u\B]dx+ \f{1}{2} \int \rho |\dot{u}|^{2}dx\\
&\leq C_{1}\int \rho  |u|^{2}|E|^{2}dx +C_{1}\int |u|^{2}|\nabla u|^{2} +C_{1}  \int|\nabla E|^{2}.
\ea\ee
To control the first term of right hand side of the latter \hs{relation}, we use \eqref{eqofE}. More  precisely,
multiplying \eqref{eqofE} by $E$, integrating by parts and invoking $(\rho E)_{t}E=\f{1}{2}(\rho E^{2})_{t}-\f12 \hs{\textrm{div}}(\rho u)$, we find
\be\ba\label{k21}
\f{1}{2} \f{d}{dt}\int \rho E^{2} + \f{\kappa}{c_{v}} \int |\nabla E|^{2}dx&= \f{1}{2}\int \text{div\,}(\rho u)E^{2}+\int E\text{div\,}F\\&= -\f{1}{2}\int (\rho u)\nabla E^{2}-\int \hs{F\nabla E}.
\ea\ee
The Young inequality \hs{helps} us to obtain
$$\ba
&-\f{1}{2}\int (\rho u)\nabla E^{2}\leq C\int \rho|u|^{2}|E|^{2}+ \f{\kappa}{4c_{v}} \int  |\nabla E|^{2}dx,
\\
&\int \hs{F\nabla E}\leq  C\int |F|^{2}  + \f{\kappa}{4c_{v}} \int   |\nabla E|^{2}dx.
\ea$$
As a consequence,
\be\ba\label{k1}
\f{1}{2} \f{d}{dt}\int \rho E^{2} + \f{\kappa}{2c_{v}} \int |\nabla E|^{2}dx\leq&
C\int\rho |u|^{2}|E|^{2} +C\int |F|^{2}  \\
\leq& C \int    |u|^{2}|\nabla u|^{2}dx +C \int \rho|u|^{2}|E|^{2} .
\ea\ee
Multiplying \hs{both sides of \eqref{k1}} by $\f{c_{v}(2C_{1}+C_{2})}{\kappa}$ and adding it \hs{to}  \eqref{k2}, we know that
\be\ba\label{k221}
  \f{d}{dt}\hs{\Phi(t)} + \f{1}{2} \int \rho |\dot{u}|^{2}dx
  +\f{C_{2}}{2} \int|\nabla E|^{2}
 \leq C \int \rho  |u|^{2}|E|^{2}dx +C \int |u|^{2}|\nabla u|^{2},
\ea\ee
where
$$
\Phi(t)=\f{1}{2}\int\B[\mu|\nabla u|^{2}+(\lambda+\mu)(\text{div\,}u)^{2}+
\frac{1}{ (2\mu+\lambda)} P^2-2P\text{div\,}u
+\f{c_{v}(2C_{1}+C_{2})}{\kappa}\rho E^{2}\B]dx.
$$
According to  $P=R\rho\theta$,
we can choose   $C_{2}$ sufficiently large to make sure that
\be\label{key1}
\mu|\nabla u|^{2}+\f{c_{v}(2C_{1}+C_{2})}{\kappa}\rho E^{2} -2P\text{div\,}u\geq
\f\mu2|\nabla u|^{2}+\rho E^{2}.
\ee
Thus, $\hs{\Phi(t)} \geq0. $

With the help of the H\"older inequality \eqref{hiL} or interpolation characteristic \eqref{Interpolation characteristic}
and Sobolev embedding \eqref{sl} in Lorentz spaces, we get
\be\ba\label{sampleinterplation}
\| \hs{\rho} E^{2}\|_{L^{\f{q}{q-2},1} (\mathbb{R}^{3})}&\leq
\| \sqrt{\rho} E \|^{ \f6q}_{L^{6,2}(\mathbb{R}^{3})}\|  \sqrt{\rho}E\|^{2-\f{6}{q}}_{L^{ 2} (\mathbb{R}^{3})}\\&\leq
C\|   E \|^{ \f6q}_{L^{6,2}(\mathbb{R}^{3})}\|  \sqrt{\rho}E\|^{2-\f{6}{q}}_{L^{ 2} (\mathbb{R}^{3})}\\&\leq C
\| \nabla E \|^{ \f6q}_{L^{2}(\mathbb{R}^{3})}
\| \sqrt{\rho}E\|^{2-\f{6}{q}}_{L^{ 2} (\mathbb{R}^{3})}.
\ea\ee
Therefore,
  H\"older's inequality \eqref{hiL}, \eqref{sampleinterplation} and the Young inequality imply that
\be\ba\label{i11}
\int \rho  |u|^{2}|E|^{2}dx&\leq \| |u|^{2}\|_{L^{\f{q}{2},\infty}(\mathbb{R}^{3})}\|  \rho  E^{2}\|_{L^{\f{q}{q-2},1} (\mathbb{R}^{3})}
\\&\leq \| u\|^{2}_{L^{\f{q}{2},\infty}(\mathbb{R}^{3})}\|   \sqrt{\rho} E\|^{2}_{L^{\f{2q}{q-2},2} (\mathbb{R}^{3})}\\
&\leq C\| u\|^{2}_{L^{q,\infty}(\mathbb{R}^{3})}\| \nabla  E \|^{\f6q}_{L^{2}(\mathbb{R}^{3})}\|\sqrt{\rho} E\|^{2-\f{6}{q}}_{L^{ 2} (\mathbb{R}^{3})}\\
&\leq C\|  u\|^{\f{2q}{q-3}}_{L^{q,\infty}(\mathbb{R}^{3})}\|  \sqrt{\rho} E\|^{2}_{L^{ 2} (\mathbb{R}^{3})}
+\f{C_{2} }{8}\| \nabla  E \|^{2}_{L^{2}(\mathbb{R}^{3})}.
\ea\ee
 Employing boundedness of Riesz Transform in Lorentz spaces
\eqref{brl}, \eqref{evf}, Sobolev inequality \eqref{sl} in Lorentz spaces and \eqref{ntle}, we infer that
$$\ba
\|\nabla u\|_{L^{6,2}(\mathbb{R}^{3})}\leq& C (\|\hs{\textrm{div}}\,u\|_{L^{6,2}(\mathbb{R}^{3})}+\|\hs{\textrm{curl}}\,u\|_{L^{6,2}(\mathbb{R}^{3})})\\
\leq& C (\|G\|_{L^{6,2}(\mathbb{R}^{3})}+\|P\|_{L^{6,2}(\mathbb{R}^{3})}+\|\hs{\textrm{curl}}\, u\|_{L^{6,2}(\mathbb{R}^{3})})\\
\leq& C (\|G\|_{L^{6,2}(\mathbb{R}^{3})}+\|\rho E\|_{L^{6,2}(\mathbb{R}^{3})}+\|\hs{\textrm{curl}}\, u\|_{L^{6,2}(\mathbb{R}^{3})})\\
\leq& C (\|\nabla G\|_{L^{2,2}(\mathbb{R}^{3})}+\|\nabla  E\|_{L^{2,2}(\mathbb{R}^{3})}+\|\nabla \hs{\textrm{curl}}\, u\|_{L^{2,2}(\mathbb{R}^{3})})\\
\leq& C (\|\rho \dot{u}\|_{L^{2}(\mathbb{R}^{3})}+\|\nabla  E\|_{L^{2}(\mathbb{R}^{3})} ).
\ea$$
With this in hand, modifying the proof of \eqref{i11}, we conclude that
\be\ba\label{i12}
C \int  |u|^{2}|\nabla u|^{2}&\leq
\| |u|^{2}\|_{L^{\f{q}{2},\infty}(\mathbb{R}^{3})}\|  |\nabla u|^{2}\|_{L^{\f{q}{q-2},1} (\mathbb{R}^{3})}\\&\leq
 \|  u\|^{2}_{L^{q,\infty}(\mathbb{R}^{3})}\|  \nabla u\|^{2}_{L^{\f{2q}{q-2},2} (\mathbb{R}^{3})}\\
&\leq C\|  u\|^{2}_{L^{q,\infty}(\mathbb{R}^{3})}\| \nabla  u \|^{2-\f6q}_{L^{2}(\mathbb{R}^{3})}\| \nabla u\|^{\f{6}{q}}_{L^{ 6,2} (\mathbb{R}^{3})}\\
&\leq C\| u\|^{2}_{L^{q,\infty}(\mathbb{R}^{3})}\| \nabla  u \|^{2-\f{6}{q}}_{L^{2}(\mathbb{R}^{3})} (\|\rho \dot{u}\|_{L^{2}(\mathbb{R}^{3})}+\|\nabla  E\|_{L^{2}(\mathbb{R}^{3})} )^{\f6q} \\
&\leq C\| u\|^{\f{2q}{q-3}}_{L^{q,\infty}(\mathbb{R}^{3})}
\|  \nabla u\|^{2}_{L^{ 2} (\mathbb{R}^{3})}+
 \f{1}{4}\|\rho \dot{u}\|_{L^{2}(\mathbb{R}^{3})}
+\f{C_{2} }{8}\|\nabla  E\|_{L^{2}(\mathbb{R}^{3})} \hs{.}
\ea\ee
Substituting \eqref{i11} and \eqref{i12} into \hs{\eqref{k221}, we have}
\be\ba
&   \f{d}{dt}\hs{\Phi(t)}+ \f{1}{4} \int \rho |\dot{u}|^{2}dx
+\f{C_{2} }{4} \int|\nabla E|^{2}dx
\\\leq& C\| u\|^{\f{2q}{q-3}}_{L^{q,\infty}(\mathbb{R}^{3})}(\|\sqrt{  \rho} E\|^{2}_{L^{ 2} (\mathbb{R}^{3})}
+\|  \nabla u\|^{2}_{L^{ 2} (\mathbb{R}^{3})}).
\ea\ee
Owing to \eqref{key1} and     dropping the unnecessary terms, we infer
\be \label{wwwy3.24}
\f{d}{dt}\Phi(t)\leq C  \|  u\|^{\f{2q}{q-3}}_{L^{q,\infty}(\mathbb{R}^{3})}\Phi(t).\ee
The interpolation characteristic \eqref{Interpolation characteristic}, Sobolev's embedding inequality \eqref{sl}, \hs{Lemma}  \ref{lemma2.2} with $a=1, b=6, c_{0}=4$ and \eqref{Lorentzincrease} allow us to \hs{deduce the estimate}
\be\label{2.2000}\ba
 \| u\|^{p_{\tau}}_{L^{q_{\tau},\infty}(\mathbb{R}^{3})}&\leq
 \| u\|^{p(1-\tau)}_{L^{q ,\infty}(\mathbb{R}^{3})} \|  u\|^{4\tau}_{L^{6,\infty}\hs{(\mathbb{R}^{3})}}\\&\leq C
 \| u\|^{p(1-\tau)}_{L^{q ,\infty}(\mathbb{R}^{3})} \| u\|^{4\tau}_{L^{6,2 }(\mathbb{R}^{3})}\\
 &\leq C
 \| u\|^{p(1-\tau)}_{L^{q ,\infty}(\mathbb{R}^{3})} \|\nabla u \|^{4\tau}_{L^{2 }(\mathbb{R}^{3})}\\
 &\leq C
 \| u\|^{p(1-\tau)}_{L^{q ,\infty}(\mathbb{R}^{3})} \Phi(t)^{2\tau},
\ea\ee
where \eqref{key1} was used again.

\hs{Since the pair $(p_{\tau}, q_{\tau})$ also meets $2/p_{\tau}+3/q_{\tau}=1$, we insert \eqref{2.2000} into \eqref{wwwy3.24} to obtain}
$$
\f{d}{dt}\Phi(t) \hs{\leq C  \|  u\|^{p_{\tau}}_{L^{q_{\tau},\infty}(\mathbb{R}^{3})}\Phi(t)} \leq C  \| u\|^{p(1-\tau)}_{L^{q ,\infty}(\mathbb{R}^{3})} \Phi(t)^{1+2\tau}.
$$
At this stage, Lemma \ref{2.1} is applicable. This together with  \eqref{HL2} \hs{enables}
   us to finish the proof of   this part.
\end{proof}

\begin{proof}[Proof of \eqref{enumerate2} in Theorem \ref{the1.1}]
Along the lines of \cite{[JWY]}, we see that
\be\ba\label{3.44}
\f{d}{dt} \Psi(t) + \f\kappa2\int|\nabla \theta|^{2}dx+&\f{1}{2}\int \rho |\dot{u}|^{2}dx+\f{1}{2}\int_{{\mathbb{R}^3}\cap\{|u|>0\}}
|u|^2\big|\nabla u\big|^2\\
&\leq C\left(\int \hs{\rho }|\theta|^{3} dx +\int\rho |u|^{2}|\theta|^{2}dx\right),
 \ea\ee
 where
$$ \Psi(t)=\int\B[\f\mu2|\nabla u|^{2}+(\mu+\lambda)(\text{div\,}u  )^2+\f{1}{2\mu+\lambda}P^2 -2P\text{div\,}u +\f{C_3 c_\nu}{2}\rho\theta^2 +\f{C_4 +1}{2\mu}\rho| u|^4]dx.$$
To bound the right hand side of \eqref{3.44},
we infer from the H\"older inequality \eqref{hiL} or interpolation characteristic \eqref{Interpolation characteristic}
and Sobolev embedding \eqref{sl} in Lorentz spaces that
 \be\ba
 \label{2.11}\Big\|  |u|^{2}\Big\|_{L^{\f{2q}{q-1},2}\hs{(\mathbb{R}^{3})}}& \leq C \hs{\Big\|}  |u|^{2} \Big\|^{1-\f{3}{2q}}_{L^{2}(\mathbb{R}^{3})}\Big\|
 |u|^{2}\Big\|^{\hs{\f{3}{2q}}}_{L^{6,2}(\mathbb{R}^{3})}\\& \leq C\Big\| |u|^{2} \Big\|^{1-\f{3}{2q}}_{L^{2}(\mathbb{R}^{3})}\Big\|\nabla
|u|^{2}\Big\|^{\f{3}{2q}}_{L^{2}(\mathbb{R}^{3})}.
\ea\ee
Likewise,
 \be\ba
 \label{2.12} \| \rho^{\f12}\theta \|_{L^{\f{2q}{q-1},2}\hs{(\mathbb{R}^{3})}}  \leq C \| \theta  \|^{1-\f{3}{2q}}_{L^{2}(\mathbb{R}^{3})} \|\nabla
\theta \|^{\f{3}{2q}}_{L^{2}(\mathbb{R}^{3})}.
\ea\ee
The H\"older's inequality \eqref{hiL}, \eqref{2.11}, \eqref{2.12} and the Young inequality show  us that
\be\ba\label{3.42}
\int\rho |u|^{2}|\theta|^{2}dx&=\int\rho^{\f12}|\theta|\rho^{\f12}|u|^{2}|\theta| dx  \\
&\leq \|\theta\|_{L^{q,\infty }(\mathbb{R}^{3})}\|\rho^{\f12}\theta\|_{L^{\f{2q}{q-1},2}(\mathbb{R}^{3})}
\|\rho^{\f12}|u|^{2}\|_{L^{\f{2q}{q-1},2}(\mathbb{R}^{3})}\\
&\leq \|\theta\|_{L^{q,\infty}(\mathbb{R}^{3})}\|\rho^{\f12}\theta\|^{1-\f{3}{2q}}_{L^{2}(\mathbb{R}^{3})}\|
\rho^{\f12}\theta\|^{\f{3}{2q}}_{L^{6}(\mathbb{R}^{3})}\|\rho^{\f12}|u|^{2}\|_{L^{2}(\mathbb{R}^{3})}^{1-\f{3}{2q}}\|
\rho^{\f12}|u|^{2}\|^{\f{3}{2q}}_{L^{6}(\mathbb{R}^{3})}\\
&\leq  C \|\theta\|_{L^{q,\infty}(\mathbb{R}^{3})}\|\rho^{\f12}\theta\|^{1-\f{3}{2q}}_{L^{2}(\mathbb{R}^{3})}\|\rho^{\f12}|u|^{2}\|_{L^{2}(\mathbb{R}^{3})}^{1-\f{3}{2q}} (\|
 \nabla\theta\|^{\f{3}{q}}_{L^{2}(\mathbb{R}^{3})}+\|\nabla|u|^{2}\|^{\f{3}{q}}_{L^{2}(\mathbb{R}^{3})})
 \\
&\leq  C \|\theta\|^{\f{2q}{2q-3}}_{L^{q,\infty}(\mathbb{R}^{3})}\|\rho^{\f12}\theta\|_{L^{2}(\mathbb{R}^{3})}\|\rho^{\f12}|u|^{2}\|_{L^{2}(\mathbb{R}^{3})}
 +\eta_{1}
 \|\nabla\theta\|^{2}_{L^{2}(\mathbb{R}^{3})}+\eta_{2}\|\nabla|u|^{2}\|^{2}_{L^{2}(\mathbb{R}^{3})} \\
&\leq  C \|\theta\|^{\f{2q}{2q-3}}_{L^{q,\infty}(\mathbb{R}^{3})}\big(\|\rho^{\f12}\theta\|_{L^{2}(\mathbb{R}^{3})}^{2}+\|\rho^{\f12}|u|^{2}\|_{L^{2}(\mathbb{R}^{3})}^{2}\big)
 +\hs{\eta_{1}}
 \|\nabla\theta\|^{2}_{L^{2}(\mathbb{R}^{3})}+\eta_{2}\|\nabla|u|^{2}\|^{2}_{L^{2}(\mathbb{R}^{3})}.  \ea
\ee
Arguing \hs{in} the same manner as the above, we see that
\be\ba \label{3.43}
\int\rho |\theta|^{3}dx&=\int\rho^{\f12}|\theta|\rho^{\f12}\theta|\theta| dx  \\
&\leq \|\theta\|_{L^{q,\infty}(\mathbb{R}^{3})}\|\rho^{\f12}\theta\|_{L^{\f{2q}{q-1},2}(\mathbb{R}^{3})}^{2}\\
&\leq \|\theta\|_{L^{q,\infty}(\mathbb{R}^{3})}\|\rho^{\f12}\theta\|^{2-\f{3}{q}}_{L^{2}(\mathbb{R}^{3})}\|
\rho^{\f12}\theta\|^{\f{3}{q}}_{L^{6,2}( \mathbb{ R}^{3})}\\
&\leq  C \|\theta\|_{L^{q,\infty}(\mathbb{R}^{3})}\|\rho^{\f12}\theta\|^{2-\f{3}{q}}_{L^{2}(\mathbb{R}^{3})}\|
 \theta\|^{\f{3}{q}}_{L^{6,2}(\mathbb{R}^{3})}\\
&\leq  C \|\theta\|_{L^{q,\infty}(\mathbb{R}^{3})}\|\rho^{\f12}\theta\|^{2-\f{3}{q}}_{L^{2}(\mathbb{R}^{3})}\|
 \nabla\theta\|^{\f{3}{q}}_{L^{2}(\mathbb{R}^{3})}\\
&\leq  C \|\theta\|^{\f{2q}{2q-3}}_{L^{q,\infty}(\mathbb{R}^{3})}\|\rho^{\f12}\theta\|_{L^{2}(\mathbb{R}^{3})}^{2}
 +\hs{\eta_{3}}\| \nabla\theta\|^{2}_{L^{2}(\mathbb{R}^{3})}.
 \ea
\ee

 Substituting  \eqref{3.42} and \eqref{3.43} into \eqref{3.44}, we have
\be\ba\label{3.441}
\f{d}{dt}\Psi(t)+ \hs{\f\kappa4}\int|\nabla \theta|^{2}dx+&\f{1}{2}\int \rho |\dot{u}|^{2}dx+\f{1}{4}\int_{{\mathbb{R}^3}\cap\{|u|>0\}}
|u|^2\big|\nabla u\big|^2\\
&\leq C \|\theta\|^{\f{2q}{2q-3}}_{L^{q,\infty}(\mathbb{R}^{3})}\big(\|\rho^{\f12}\theta\|_{L^{2}(\mathbb{R}^{3})}^{2}+\|\rho^{\f12}|u|^{2}\|_{L^{2}(\mathbb{R}^{3})}^{2}\big)\\
& \leq C\|\theta\|_{L^{q,\infty}(\mathbb{R}^{3})}^{\f{2q}{2q-3}}\Psi(t),
 \ea\ee
  where we used the fact that
 $$\int (\mu+\lambda)(\text{div\,}u)^2+\f{1}{2\mu+\lambda}P^2-2P\text{div\,}u +\f{C_3C_\nu}{2}\rho\theta^2dx\geq \int \rho\theta^2dx,$$
 provided that the constant $C_3$ is suitable large enough.

Thanks to interpolation characteristic \eqref{Interpolation characteristic} or the H\"older inequality \eqref{hiL}, applying lemma  \ref{lemma2.2} with $a=2, b=6$ and \eqref{Lorentzincrease}, we see that
\be\label{2.20001}
 \|\theta\|^{p_{\tau}}_{L^{q_{\tau},\infty}(\mathbb{R}^{3})}\leq
 \|\theta\|^{p(1-\tau)}_{L^{q ,\infty}(\mathbb{R}^{3})} \|\theta\|^{c_{0}\tau}_{L^{6,\infty}\hs{(\mathbb{R}^{3})}}\leq C
 \|\theta\|^{p(1-\tau)}_{L^{q ,\infty}(\mathbb{R}^{3})} \|\theta\|^{c_{0}\tau}_{L^{6 }(\mathbb{R}^{3})}\leq C
 \|\theta\|^{p(1-\tau)}_{L^{q ,\infty}(\mathbb{R}^{3})} \|\nabla\theta \|^{c_{0}\tau}_{L^{2 }(\mathbb{R}^{3})},
\ee
where $c_{0}$ is \hs{a constant to be} determined  later.

Since the pair $(p_{\tau}, q_{\tau})$ also meets $2/p_{\tau}+3/q_{\tau}=2$, we insert \hs{\eqref{2.20001} into \eqref{3.441}} to obtain
\be\ba\label{3.443}
\f{d}{dt}\Psi(t)+ \f\kappa4\int|\nabla \theta|^{2}dx+&\f{1}{4}\int \rho |\dot{u}|^{2}dx+\f{1}{4}\int_{{\mathbb{R}^3}\cap\{|u|>0\}}
|u|^2\big|\nabla u\big|^2\\
&\leq C
 \|\theta\|^{p(1-\tau)}_{L^{q ,\infty}(\mathbb{R}^{3})} \|\nabla\theta \|^{c_{0}\tau}_{L^{2 }(\mathbb{R}^{3})}\Psi(t)\\
  &\leq C\| \theta \|^{\f{2p(1-\tau)}{2-\hs{c_{0}}\tau}}_{L^{q,\infty}(\mathbb{R}^{3})} \Psi(t)^{\f{2}{2-\hs{c_{0}}\tau}} +\f{\kappa}{8} \| \nabla \theta  \|^{2}_{L^{2 }(\mathbb{R}^{3})},
 \ea\ee
Before going further, we take  $$\delta=\f{(2-\hs{c_{0}})\tau}{2-\hs{c_{0}}\tau},~~\hs{c_{0}}=\f{4}{3},$$
in the last relation.
Therefore, dropping some positive terms in \hs{\eqref{3.443}}, we  \hs{have}
 $$\f{d}{dt}  \Psi(t)\leq C\| \theta \|^{p(1-\delta)}_{L^{q,\infty}(\mathbb{R}^{3})} \Psi(t)^{4(1+2\delta) }.$$
In view of  $\kappa\in[0,1]$, we know that $\delta\in[0,1]$.
Finally, the proof is now based on applications of
Lemma \ref{2.1} and  \eqref{HL2}.
\end{proof}

\subsection{ Continuation theorem in anisotropic Lebesgue spaces}
\begin{proof}[Proof of \hs{\emph{(1)} in} Theorem \ref{the1.2}]
The interpolation inequality
  \eqref{interi} in   anisotropic Lebesgue \hs{spaces} entails that
\be \ba\label{anLI4.1}\|\rho E \|_{L_{1}^{\f{2q_{1}}{q_{1}-2}}L_{2}^{\f{2q_{2}}{q_{2}-2}}
L_{3}^{\f{2\hs{q}_{3}}{\hs{q}_{3}-2}}(\mathbb{R}^{3})}
&\leq  C\|  \rho   E \|^{1-(\f{1}{q_{1}}+\f{1}{q_{2}}+\f{1}{q_{3}})}_{L^2(\mathbb{R}^{3})}
\|  \rho   E \|^{(\f{1}{q_{1}}+\f{1}{q_{2}}+\f{1}{q_{3}})}
_{L^{\overrightarrow{m}}(\mathbb{R}^{3})} \\&\leq  C\|  \rho   E \|^{1-(\f{1}{q_{1}}+\f{1}{q_{2}}+\f{1}{q_{3}})}_{L^2(\mathbb{R}^{3})}
\|      E \|^{(\f{1}{q_{1}}+\f{1}{q_{2}}+\f{1}{q_{3}})}
_{L^{\overrightarrow{m}}(\mathbb{R}^{3})},
\ea\ee
with $\overrightarrow{m}$ defined by
$\overrightarrow{m}=\B(\f{\f{1}{q_{1}}+\f{1}{q_{2}}+\f{1}{q_{3}}}
{\f12(\f{1}{q_{1}}+\f{1}{q_{2}}+\f{1}{q_{3}})-\f{1}{\hs{q}_{1}}},\f{\f{1}{q_{1}}+\f{1}{q_{2}}+\f{1}{q_{3}}}
{\f12(\f{1}{q_{1}}+\f{1}{q_{2}}+\f{1}{q_{3}})-\f{1}{\hs{q}_{2}}},\f{\f{1}{q_{1}}+\f{1}{q_{2}}+\f{1}{q_{3}}}
{\f12(\f{1}{q_{1}}+\f{1}{q_{2}}+\f{1}{q_{3}})-\f{1}{\hs{q}_{3}}}\B)$.

The Sobolev
embedding theorem \eqref{zc} in anisotropic Lebesgue \hs{spaces}
guarantees
\be \ba\label{anLI2}\|\rho E \|_{L_{1}^{\f{2q_{1}}{q_{1}-2}}L_{2}^{\f{2q_{2}}{q_{2}-2}}
L_{3}^{\f{2\hs{q}_{3}}{\hs{q}_{3}-2}}(\mathbb{R}^{3})}
&\leq  C\|    \rho E \|^{1-(\f{1}{q_{1}}+\f{1}{q_{2}}+\f{1}{q_{3}})}_{L^2(\mathbb{R}^{3})}
\|    \nabla E \|^{(\f{1}{q_{1}}+\f{1}{q_{2}}+\f{1}{q_{3}})}
_{L^{2}(\mathbb{R}^{3})}\hs{.} \ea\ee
It follows from  H\"older inequality \eqref{HIAL} in  anisotropic Lebesgue \hs{spaces} that
\be\ba\label{4.3}
 \int \rho  |u|^{2}|E|^{2}dx  &\leq C \||u|^{2}\|_{L^{\f{\overrightarrow{q} }{2}}(\mathbb{R}^{3})} \| \rho E^{2}\|_{L_{1}^{\f{\hs{q}_{1}}{\hs{q}_{1}-2}}L_{2}^{\f{\hs{q}_{2}}{\hs{q}_{2}-2}}
L_{3}^{\f{\hs{q}_{3}}{\hs{q}_{3}-2}}(\mathbb{R}^{3})}
\\
&\leq C \| u \|^2_{L^{ {\overrightarrow{q} } }(\mathbb{R}^{3})} \| \rho E \|^{2}_{L_{1}^{\f{2q_{1}}{q_{1}-2}}L_{2}^{\f{2q_{2}}{q_{2}-2}}
L_{3}^{\f{2\hs{q}_{3}}{\hs{q}_{3}-2}}(\mathbb{R}^{3})}\\
&\leq C \| u \|^2_{L^{ {\overrightarrow{q} } }(\mathbb{R}^{3})}
\|  \nabla  E \|^{2(\f{1}{q_{1}}+\f{1}{q_{2}}+\f{1}{q_{3}})}_{L^2(\mathbb{R}^{3})} \|      \rho E \|^{2-2(\f{1}{q_{1}}+\f{1}{q_{2}}+\f{1}{q_{3}})}_{L^2(\mathbb{R}^{3})}
\\
&\leq C\| u \|^{\f{2}{1-(\f{1}{q_{1}}+\f{1}{q_{2}}+\f{1}{q_{3}})}}_{L^{ {\overrightarrow{q} } }(\mathbb{R}^{3})}\| \rho E \|_{L^{2}(\mathbb{R}^{3})}^{2}
+\f{C_{2} }{8}\|  \nabla  E \|_{L^{2}(\mathbb{R}^{3})}^{2}.
 \ea\ee
By following exactly the lines of reasoning which led to \hs{\eqref{anLI4.1}}, we infer that
\be \ba\label{anLI4.3}\| \nabla u  \|_{L_{1}^{\f{2q_{1}}{q_{1}-2}}L_{2}^{\f{2q_{2}}{q_{2}-2}}
L_{3}^{\f{2\hs{q}_{3}}{\hs{q}_{3}-2}}(\mathbb{R}^{3})}
&\leq  C\|     \nabla u  \|^{1-(\f{1}{q_{1}}+\f{1}{q_{2}}+\f{1}{q_{3}})}_{L^2(\mathbb{R}^{3})}
\|   \nabla u  \|^{(\f{1}{q_{1}}+\f{1}{q_{2}}+\f{1}{q_{3}})}
_{L^{\overrightarrow{m}}(\mathbb{R}^{3})}.
 \ea\ee
Taking advantage of  \eqref{bsa}, we conclude that
$$\ba
\|\nabla u\|_{L^{\overrightarrow{m}}(\mathbb{R}^{3})}\leq& C (\|\hs{\textrm{div}}\, u\|_{L^{\overrightarrow{m}}(\mathbb{R}^{3})}+\|\hs{\textrm{curl}} \,u\|_{L^{\overrightarrow{m}}(\mathbb{R}^{3})})\\
\leq& C (\|G\|_{L^{\overrightarrow{m}}(\mathbb{R}^{3})}+\|P\|_{L^{\overrightarrow{m}}(\mathbb{R}^{3})}+\|\hs{\textrm{curl}} \,u\|_{L^{\overrightarrow{m}}(\mathbb{R}^{3})})\\
\leq& C (\|G\|_{L^{\overrightarrow{m}}(\mathbb{R}^{3})}+\|E\|_{L^{\overrightarrow{m}}(\mathbb{R}^{3})}+\|\hs{\textrm{curl}} \,u\|_{L^{\overrightarrow{m}}(\mathbb{R}^{3})})
\\
\leq& C (\|\nabla G\|_{L^{2}(\mathbb{R}^{3})}+\|\nabla E\|_{L^{2}(\mathbb{R}^{3})}+\|\nabla \hs{\textrm{curl}}\, u\|_{L^{2}(\mathbb{R}^{3})})
\\
\leq& C (\|\rho \dot{u}\|_{L^{2}(\mathbb{R}^{3})}+\|\nabla E\|_{L^{2}(\mathbb{R}^{3})})\hs{,}
\ea$$
where
the Sobolev
embedding theorem \eqref{zc} in anisotropic Lebesgue \hs{spaces}
was used again.
This means that
\be \ba\label{anLI}\| \nabla u  \|_{L_{1}^{\f{2q_{1}}{q_{1}-2}}L_{2}^{\f{2q_{2}}{q_{2}-2}}
L_{3}^{\f{2q_{3}}{q_{3}-2}}(\mathbb{R}^{3})}
&\leq  C\|     \nabla u  \|^{1-(\f{1}{q_{1}}+\f{1}{q_{2}}+\f{1}{q_{3}})}
_{L^2(\mathbb{R}^{3})}
(\|\rho \dot{u}\|_{L^{2}(\mathbb{R}^{3})}+\|\nabla E\|_{L^{2}(\mathbb{R}^{3})})^{(\f{1}{q_{1}}+\f{1}{q_{2}}+\f{1}{q_{3}})}. \ea\ee
The H\"older inequality in anisotropic Lebesgue \hs{spaces} and the latter relation yield
\be\ba\label{4.6}
\int\hs{\rho}|u|^{2}| \nabla u|^{2}dx&\leq C \|\rho|u|^{2}\|_{L^{\f{\overrightarrow{q} }{2}}(\mathbb{R}^{3})} \|| \nabla u|^{2}\|_{L_{1}^{\f{\hs{q}_{1}}{\hs{q}_{1}-2}}L_{2}^{\f{\hs{q}_{2}}{\hs{q}_{2}-2}}
L_{3}^{\f{\hs{q}_{3}}{\hs{q}_{3}-2}}(\mathbb{R}^{3})}\\
&\leq C \| |u|^{2}\|_{L^{\f{\overrightarrow{q} }{2}}(\mathbb{R}^{3})} \| \nabla u\|^{2}_{L_{1}^{\f{2q_{1}}{q_{1}-2}}L_{2}^{\f{2q_{2}}{q_{2}-2}}
L_{3}^{\f{2\hs{q}_{3}}{\hs{q}_{3}-2}}(\mathbb{R}^{3})}\\
&\leq C \|  u \|^2_{L^{ {\overrightarrow{q} } }(\mathbb{R}^{3})} \|     \nabla u  \|^{2-2(\f{1}{q_{1}}+\f{1}{q_{2}}+\f{1}{q_{3}})}
_{L^2(\mathbb{R}^{3})}
(\| \dot{u}\|_{L^{2}(\mathbb{R}^{3})}+\|\nabla E\|_{L^{2}(\mathbb{R}^{3})})^{2(\f{1}{q_{1}}+\f{1}{q_{2}}+\f{1}{q_{3}})}
\\&\leq
  C\|  u \|^{\f{2}{1-(\f{1}{q_{1}}+\f{1}{q_{2}}+\f{1}{q_{3}})}}
  _{L^{ {\overrightarrow{q} } }(\mathbb{R}^{3})}
  \| \nabla u \|_{L^{2}(\mathbb{R}^{3})}^{2}+
  \f{C_{2} }{8}\|  \nabla  E \|^{2}_{L^{2}(\mathbb{R}^{3})}+
  \f14\|  \rho \dot{u} \|^{2}_{L^{2}(\mathbb{R}^{3})}  .
  \ea\ee
Inserting \eqref{4.3} and \eqref{4.6} into \eqref{k221}, we observe that
\be\ba\label{k41}
&   \f{d}{dt}\hs{\Phi(t)}+ \f{1}{4}
\int \rho |\dot{u}|^{2}dx+ \f{C_{2} }{4}\int|\nabla E|^{2}dx\\
\leq& C\| u \|^{\f{2}{1-(\f{1}{q_{1}}+\f{1}{q_{2}}+\f{1}{q_{3}})}}
_{L^{ {\overrightarrow{q} } }(\mathbb{R}^{3})}(\|  \rho E\|^{2}_{L^{ 2} ( \mathbb{R}^{3})}+\|  \nabla u\|^{2}_{L^{ 2} (\mathbb{R}^{3})}).
\ea\ee
To finish the proof, we now merely have to combine the Gronwall's inequality
with \eqref{k41} and \eqref{HL2}.

\end{proof}

\begin{proof}[Proof of \hs{\emph{(2)} in} Theorem \ref{the1.2}]
Estimating the last term of (\ref{3.44}) \hs{by}
the
H\"older inequality \hs{in} anisotropic Lebesgue spaces, we see that
$$\ba
 \int \rho  |u|^{2}|\theta|^{2}dx  &\leq C \|\theta\|_{L^{\overrightarrow{q}}(\mathbb{R}^{3})} \| \sqrt{\rho} \theta \|_{L_{1}^{\f{2q_{1}}{q_{1}-1}}L_{2}^{\f{2\hs{q_{2}}}{q_{2}-1}}
L_{3}^{\f{2q_{3}}{q_{3}-1}}(\mathbb{R}^{3})}\| \sqrt{\rho} |u|^{2}\|_{L_{1}^{\f{2q_{1}}{q_{1}-1}}L_{2}^{\f{2\hs{q_{2}}}{q_{2}-1}}
L_{3}^{\f{2q_{3}}{q_{3}-1}}(\mathbb{R}^{3})}.
\ea$$
Due to \hs{the} interpolation inequality  \eqref{interi}, we get
$$\ba
\| \sqrt{\rho} \hs{\theta}\|_{L_{1}^{\f{2q_{1}}{q_{1}-1}}L_{2}^{\f{2\hs{q_{2}}}{q_{2}-1}}
L_{3}^{\f{2q_{3}}{q_{3}-1}}(\mathbb{R}^{3})}&\leq  C\| \sqrt{\rho} \theta \|^{1-\f12(\f{1}{q_{1}}+\f{1}{q_{2}}+\f{1}{q_{3}})}_{L^2(\mathbb{R}^{3})}
\|  \sqrt{\rho}   \theta \|^{\f12(\f{1}{q_{1}}+\f{1}{q_{2}}+\f{1}{q_{3}})}
_{L^{\overrightarrow{m}}(\mathbb{R}^{3})}\\&\leq  C\| \sqrt{\rho} \theta \|^{1-\f12(\f{1}{q_{1}}+\f{1}{q_{2}}+\f{1}{q_{3}})}_{L^2(\mathbb{R}^{3})}
\|    \nabla \theta \|^{\f12(\f{1}{q_{1}}+\f{1}{q_{2}}+\f{1}{q_{3}})}
_{L^{2}(\mathbb{R}^{3})}.
\ea$$
Likewise,
$$\ba
\| \sqrt{\rho} |u|^{2}\|_{L_{1}^{\f{2q_{1}}{q_{1}-1}}L_{2}^{\f{2\hs{q_{2}}}{q_{2}-1}}
L_{3}^{\f{2q_{3}}{q_{3}-1}}(\mathbb{R}^{3})}&\leq  C\| \sqrt{\rho} |u|^{2} \|^{1-\f12(\f{1}{q_{1}}+\f{1}{q_{2}}+\f{1}{q_{3}})}_{L^2(\mathbb{R}^{3})}
\|    \nabla |u|^{2} \|^{\f12(\f{1}{q_{1}}+\f{1}{q_{2}}+\f{1}{q_{3}})}
_{L^{2}(\mathbb{R}^{3})}.
\ea$$
It is easy to see \hs{from the} Young inequality that
$$\ba
 &\int \rho  |u|^{2}|\theta|^{2}dx \\\leq& C
 \|\theta\|_{L^{ {\overrightarrow{q} } }(\mathbb{R}^{3})} \hs{\|} \sqrt{\rho} \theta \|
 ^{1-\f12(\f{1}{q_{1}}+\f{1}{q_{2}}+\f{1}{q_{3}})}_{L^2(\mathbb{R}^{3})}
 \| \sqrt{\rho} |u|^{2} \|^{1-\f12(\f{1}{q_{1}}+\f{1}{q_{2}}+\f{1}{q_{3}})}_{L^2(\mathbb{R}^{3})}
(\|    \nabla \theta \|^{(\f{1}{q_{1}}+\f{1}{q_{2}}+\f{1}{q_{3}})}
_{L^{2}(\mathbb{R}^{3})}+\|    \nabla |u|^{2} \|^{(\f{1}{q_{1}}+\f{1}{q_{2}}+\f{1}{q_{3}})}
_{L^{2}(\mathbb{R}^{3})})\\
\leq& C\|\theta \|^{\f{2}{2-(\f{1}{q_{1}}+\f{1}{q_{2}}+\f{1}{q_{3}})}}
_{L^{ {\overrightarrow{q} } }(\mathbb{R}^{3})}(\| \sqrt{\rho}\theta \|_{L^{2}(\mathbb{R}^{3})}^{2} +\| \sqrt{\rho}|u|^{2} \|_{L^{2}(\mathbb{R}^{3})}^{2})+\f{1}{16}\hs{(\|  \nabla \theta \|_{L^{2}(\mathbb{R}^{3})}^{2}+\|  \nabla |u|^{2} \|_{L^{2}(\mathbb{R}^{3})}^{2})}.
\ea$$
Similarly, we conclude that
$$\ba
 \int \rho   |\theta|^{3}dx  &\leq C \|\theta\|_{L^{ {\overrightarrow{q} } }(\mathbb{R}^{3})} \| \sqrt{\rho} \theta \|^{2}_{L_{1}^{\f{2q_{1}}{q_{1}-1}}L_{2}^{\f{2\hs{q_{2}}}{q_{2}-1}}
L_{3}^{\f{2q_{3}}{q_{3}-1}}(\mathbb{R}^{3})} \\&
\leq  C \|\theta\|_{L^{ {\overrightarrow{q} } }(\mathbb{R}^{3})}\| \sqrt{\rho} \theta \|^{2- (\f{1}{q_{1}}+\f{1}{q_{2}}+\f{1}{q_{3}})}_{L^2(\mathbb{R}^{3})}
\|    \nabla \theta \|^{ (\f{1}{q_{1}}+\f{1}{q_{2}}+\f{1}{q_{3}})}
_{L^{2}(\mathbb{R}^{3})}\\
 &\leq C\|\theta \|^{\f{2}{2-(\f{1}{q_{1}}+\f{1}{q_{2}}+\f{1}{q_{3}})}}_{L^{ {\overrightarrow{q} } }(\mathbb{R}^{3})} \| \sqrt{\rho}\theta \|_{L^{2}(\mathbb{R}^{3})}^{2} +
 \f{1}{16} \|  \nabla \theta \|^{2}_{L^{2}(\mathbb{R}^{3})}.
 \ea$$
As a consequence of \hs{the above} estimates and  \eqref{3.44}, we see that
$$\ba
\f{d}{dt}\Psi(t)+ \f\kappa4\int|\nabla \theta|^{2}dx+&\f{1}{2}\int \rho |\dot{u}|^{2}dx+\f{1}{4}\int_{{\mathbb{R}^3}\cap\{|u|>0\}}
|u|^2\big|\nabla u\big|^2\\&\leq
C\|\theta \|^{\f{2}{2-(\f{1}{q_{1}}+\f{1}{q_{2}}+\f{1}{q_{3}})}}
_{L^{ {\overrightarrow{q} } }(\mathbb{R}^{3})}(\| \sqrt{\rho}\theta \|_{L^{2}(\mathbb{R}^{3})}^{2}+\| \sqrt{\rho}|u|^{2} \|_{L^{2}(\mathbb{R}^{3})}^{2}).
 \ea$$
Combining this and Gronwall's lemma, we are now able to complete the proof.
\end{proof}

\section{Nonhomogeneous
incompressible Navier-Stokes equations}
We \hs{proceed with} the nonhomogeneous
incompressible Navier-Stokes equations similarly as \hs{in} the previous proof.
\begin{proof}[\wg{Proof of   Theorem \ref{the1.4}}]

Taking the $L^2$ inner product of the system with $u_t$ and \hs{integrating} by \hs{parts, we have}

$$\ba
\f{\mu}{2}\f{d}{dt}\int|\nabla u|^{2}dx+\int \rho |u_{t}|^{2}dx
=-\hs{\int}\rho u\cdot\nabla u\cdot u_{t}dx\hs{.}
\ea$$
From the Young inequality, we infer that
$$-\hs{\int}\rho u\cdot\nabla u\cdot u_{t}dx\leq \f{1}{4}\int \rho |u_{t}|^{2}dx+C\hs{\int}\rho|u|^{2}| \nabla u|^{2}dx\hs{.}$$
\hs{Next, we} write (\ref{NNS}) in the form
$$-\mu \Delta u+\nabla \Pi=-\rho u_{t}-\rho u\cdot\nabla u \hs{.}$$
Resorting to the classical Stokes estimate, we discover that
$$\|\nabla^{2}u\|^{2}_{L^{2}(\mathbb{R}^{3})}\leq C_{6}
(\|\rho u_{t}\|^{2}_{L^{2}(\mathbb{R}^{3})}
+\|\rho u\cdot\nabla u\|_{L^{2}(\mathbb{R}^{3})}^{2})\hs{,}$$
namely,
$$
\f{1}{4C_{6}}\|\nabla^{2}u\|^{2}_{L^{2}(\mathbb{R}^{3})}\leq \f14\hs{(\|\rho u_{t}\|_{L^{2}(\mathbb{R}^{3})}^{2}+\|\rho u\cdot\nabla u\|_{L^{2}(\mathbb{R}^{3})}^{2})}\hs{.}
$$
This in turn implies
\be\ba\label{nnskey}
\f12\int \rho |u_{t}|^{2}dx
+\f{\mu}{2}\f{d}{dt}\int|\nabla u|^{2}dx +\f{1}{4C_{6}}\int|\nabla^{2} u|^{2}dx\leq C\hs{\int}\rho|u|^{2}| \nabla u|^{2}dx.
\ea\ee

\hs{(1)} \hs{Repeating} the \hs{process of deduction in} \eqref{i11}, we obtain
$$\ba
\hs{\int}\rho|u|^{2}| \nabla u|^{2}dx&\leq \||u|^{2}\|_{L^{\f{q}{2},\infty}(\mathbb{R}^{3})}\|    \hs{|\nabla u|^{2}}\|_{L^{\f{q}{q-2},1} (\mathbb{R}^{3})}\\&\leq
\|u\|^{2}_{L^{\hs{q},\infty}(\mathbb{R}^{3})}\|   \nabla u\|^{2}_{L^{\f{2q}{q-2},2} (\mathbb{R}^{3})} \\
&\leq C\|  u\|^{2}_{L^{q,\infty}(\mathbb{R}^{3})}\| \nabla^2 u \|^{\f6q}_{L^{2}(\mathbb{R}^{3})}\| \nabla u\|^{2-\f{6}{q}}_{L^{ 2} (\mathbb{R}^{3})}\\
&\leq C\| u\|^{\f{2q}{q-3}}_{L^{q,\infty}(\mathbb{R}^{3})}\|  \nabla u\|^{2}_{L^{ 2} (\mathbb{R}^{3})}+\f{1}{32}\| \nabla^2  u \|^{2}_{L^{2}(\mathbb{R}^{3})}\hs{.}
\ea$$
\hs{Lemma} \ref{lemma2.2} with $a=1, b=6$ and $c_{0}=4$ gives
$$\ba
\| u\|^{p_{\tau}}_{L^{q_{\tau},\infty}(\mathbb{R}^{3})}&\leq
\| u\|^{p(1-\tau)}_{L^{q ,\infty}(\mathbb{R}^{3})} \| u\|^{4\tau}_{L^{6,\infty}\hs{(\mathbb{R}^{3})}}\\&\leq \hs{C}
 \| u\|^{p(1-\tau)}_{L^{q ,\infty}(\mathbb{R}^{3})} \| u\|^{4\tau}_{L^{6,2 }(\mathbb{R}^{3})}\\&\leq \hs{C}
 \| u\|^{p(1-\tau)}_{L^{q ,\infty}(\mathbb{R}^{3})} \|\nabla u \|^{4\tau}_{L^{2 }(\mathbb{R}^{3})}\hs{.}\ea$$
Plugging these \hs{estimates} into \eqref{nnskey}, we find
\be\label{5.3}
\f12\int \rho |u_{t}|^{2}dx
+\f{\mu}{2}\f{d}{dt}\int|\nabla u|^{2}dx +\f{1}{8C_{6}}\int|\nabla^{2} u|^{2}dx\leq C
 \| u\|^{p(1-\tau)}_{L^{q ,\infty}(\mathbb{R}^{3})} \|\nabla u \|^{\hs{4\tau+2}}_{L^{2 }(\mathbb{R}^{3})}.
 \ee
Applying Lemma \ref{2.1} to \eqref{5.3} and using \eqref{kim},
 we complete the proof.

\hs{(2)} Exactly as in the derivation of \eqref{4.3}, we \hs{have}
$$\ba
\hs{\int}\rho|u|^{2}| \nabla u|^{2}dx
&\leq C \||u|^{2}\|_{L^{\f{\overrightarrow{q} }{2}}(\mathbb{R}^{3})} \|| \nabla u|^{2}\|_{\hs{L_{1}^{\f{q_{1}}{q_{1}-2}}L_{2}^{\f{q_{2}}{q_{2}-2}}
L_{3}^{\f{q_{3}}{q_{3}-2}}}(\mathbb{R}^{3})}\\
&\leq C \| u \|^2_{L^{ {\overrightarrow{q} } }(\mathbb{R}^{3})} \|  \nabla u \|^{2}_{L_{1}^{\f{2q_{1}}{q_{1}-2}}L_{2}^{\f{2q_{2}}{q_{2}-2}}
L_{3}^{\f{2\hs{q_{3}}}{\hs{q_{3}}-2}}(\mathbb{R}^{3})}\\
&\leq C \| u \|^2_{L^{ {\overrightarrow{q} } }(\mathbb{R}^{3})}
\|  \nabla^{2} u \|^{2(\f{1}{q_{1}}+\f{1}{q_{2}}+\f{1}{q_{3}})}_{L^2(\mathbb{R}^{3})} \|  \nabla  u \|^{\hs{2-2(\f{1}{q_{1}}+\f{1}{q_{2}}+\f{1}{q_{3}})}}_{L^2(\mathbb{R}^{3})}
\\
&\leq C\| u \|^{\f{2}{1-(\f{1}{q_{1}}+\f{1}{q_{2}}+\f{1}{q_{3}})}}_{L^{ {\overrightarrow{q} } }(\mathbb{R}^{3})}\|  \nabla  u \|_{L^{2}(\mathbb{R}^{3})}^{2}+\f{1}{8C_{6}}\|  \hs{\nabla^{2} u} \|_{L^{2}(\mathbb{R}^{3})}^{2}\hs{.}
\ea$$
As a consequence, we get
\be\ba\label{5.2}
\f12\int \rho |u_{t}|^{2}dx
+\f{\mu}{2}\f{d}{dt}\int|\nabla u|^{2}dx +\f{1}{8C_{6}}\int|\nabla^{2} u|^{2}dx\leq C \| u \|^{\f{2}{1-(\f{1}{q_{1}}+\f{1}{q_{2}}+\f{1}{q_{3}})}}_{L^{ {\overrightarrow{q} } }(\mathbb{R}^{3})}\|  \nabla  u \|_{L^{2}(\mathbb{R}^{3})}^{2}.
\ea\ee

\hs{(3)} In view of the H\"older inequality and Sobolev inequality, we get
$$\ba
\hs{\int}\rho|u|^{2}| \nabla u|^{2}dx
&\leq \||u|^{2}\|_{L^{3}(\mathbb{R}^{3})}\||\nabla u|^{2}\|_{L^{\f32}(\mathbb{R}^{3})}\\
&\leq \|u \|^{2}_{L^{6}(\mathbb{R}^{3})}\|\hs{\nabla u}\|^{2}_{L^{3}(\mathbb{R}^{3})}
\\
&\leq C\| \nabla u \|^{2}_{L^{3}(\mathbb{R}^{3})}\|\nabla u \|^{2}_{L^{2}(\mathbb{R}^{3})}\hs{.}\ea$$
This leads to
\be\label{5.4}
\f12\int \rho |u_{t}|^{2}dx
+\f{\mu}{2}\f{d}{dt}\int|\nabla u|^{2}dx +\f{1}{8C_{6}}\int|\nabla^{2} u|^{2}dx\leq C \| \nabla u \|^{2}_{L^{3}(\mathbb{R}^{3})}
\|\nabla u \|^{2}_{L^{2}(\mathbb{R}^{3})}.
\ee
With \eqref{nnskey},  \eqref{5.2}  and \eqref{5.4} in hand, employing classical Gronwall's lemma, we complete  the proof by \eqref{kim}.

 \end{proof}

\section*{Acknowledgement}
Wang was partially supported by  the National Natural
Science Foundation of China under grant (No. 11971446  and No. 11601492). Wei was partially supported by the National Natural Science Foundation of China under grant (No. 11601423, No. 11701450, No. 11701451, No. 11771352, No. 11871057) and Scientific Research Program Funded by Shaanxi Provincial Education Department (Program No. 18JK0763).
 The research of Wu was partially supported by the National
Natural Science Foundation of China under grant No. 11771423 and No. 11671378. Ye is supported by the NSFC (No. 11701145).

\end{document}